\theoremstyle{plain}
\newtheorem{theorem}{Theorem}[section]
\newtheorem{corollary}[theorem]{Corollary}
\newtheorem{lemma}[theorem]{Lemma}
\newtheorem{remark}[theorem]{Remark}
\theoremstyle{definition}
\newtheorem{definition}[theorem]{Definition}
\newcommand{\R}{\mathbb{R}}
\newcommand{\N}{\mathbb{N}}
\newcommand{\ep}{\varepsilon}
\newcommand{\Deg}{\operatorname{Deg}}
\newcommand{\D}{\Deg_{\max}}
\newcommand{\Qmin}{Q_{\min}}
\newcommand{\eps}{\varepsilon}
\newcommand{\Hidden}[1]{}
\DeclareMathOperator{\sgn}{sgn}
\DeclareMathOperator{\dfct}{def}
\DeclarePairedDelimiter{\abs}{\lvert}{\rvert}
\DeclarePairedDelimiter{\norm}{\|}{\|}
\begin{document}

\title{Large scale Ricci curvature on graphs}
\author{Mark Kempton\footnote{Brigham Young University, Provo UT, USA. mkempton@mathematics.byu.edu}
~~~~~Gabor Lippner\footnote{Northeastern University, Boston MA, USA. g.lippner@northeastern.edu}
~~~~~Florentin M\"unch\footnote{Max Planck Institute, Leipzig, Germany. Florentin.Muench@mis.mpg.de}}
\date{}
\maketitle

\begin{abstract}
We define a hybrid between Ollvier and Bakry Emery curvature on graphs with dependence on a variable neighborhood. The hexagonal lattice is non-negatively curved under this new curvature notion. Bonnet-Myers diameter bounds and Lichnerowicz eigenvalue estimates follow from the standard arguments. We prove gradient estimates similar to the ones obtained from Bakry Emery curvature allowing us to prove Harnack and Buser inequalities.
\end{abstract}

\section{Introduction}

The analogy between graphs and Riemannian manifolds, via their Laplace operators, has a long history~
\cite{mohar1989isoperimetric, alon1985lambda1, keller2010unbounded,mohar1989survey,
chung1997spectral,chung2005laplacians,
hein2005graphs,weber2008analysis}. Originally, this analogy was mostly exploited on the level of spectrum and eigenfunctions.  Recently, however, various notions of curvature for discrete spaces, and graphs in particular, have received growing attention~\cite{lin2011ricci,ollivier2007ricci,
ollivier2009ricci,erbar2012ricci,
najman2017modern,jost2014ollivier,
bauer2011ollivier,bauer2015li,
bhattacharya2015exact,lin2015equivalent,
liu2014eigenvalue,munch2014li,munch2017remarks,
paeng2012volume,chung2017curvature,
bourne2017ollivier,liu2016bakry,
munch2017ollivier}. Unfortunately, it seems, there is no single best notion of curvature in the discrete setting. Instead, one typically chooses a standard consequence of curvature bounded from below in the manifold setting, and uses that as the definition in the discrete setting. So far the most popular methods were based on optimal transport (Ollivier-Ricci curvature) or on Bochner's identity and curvature-dimension inequalities (Bakry-Emery curvature). While Bakry-Emery curvature conditions imply many interesting properties, the examples even where the simplest $CD(0,\infty)$ condition is known to hold are very limited. On the other hand, Ollivier curvature conditions, while allowing for more examples, are not known to imply many of the usual and important consequences that hold on manifolds.

The quest is then to find conditions that are strong enough to imply interesting theorems, but not too restrictive to allow many interesting examples. Our view is that the extreme and rigid locality (that curvature depends only on the 2-step neighborhood of a node) is one of the major roadblocks in finding interesting examples. For instance, all previously studied curvature variants will assign negative curvature to the hexagonal lattice, since it is locally tree-like up to two steps. However, it ``should'' clearly have non-negative curvature as it is a planar tiling.

In this paper we introduce three related, but slightly different, new notions of curvature. These are hybrids between Ollivier and Bakry-Emery curvatures. Their common, and perhaps most important, feature is that they allow a scaling parameter: the radius $R$ of the neighborhood of the node they are sensitive to. In particular, the hexagonal lattice will have non-negative curvature for $R = 2$. At the same time, lower bounds on these curvature will imply many of the consequences of Bakry-Emery curvature.

We derive gradient estimates for all variants, and give a combinatorial characterization of non-negative curvature. As it turns out, having non-negative curvature for any one variant implies non-negative curvature for the other two variants. So in the sense of non-negative curvature, they are equivalent to each other.

The paper is organized as follows. In Section~\ref{sec:first_estimate} we motivate and introduce our simplest new variant. In Section~\ref{sec:variants} we define the other two curvature notions, and in Section~\ref{sec:estimates} we explain important consequences of curvature bounded from below in all three cases. In Section~\ref{sec:nonnegative} we give a combinatorial characterization of non-negative curvature. Finally, Section~\ref{sec:proofs} contains all the proofs.

\subsection{Notation}\label{sec:notation}

Throughout the paper we consider simple graphs $G(V,E)$ with vertex set $V$ and edge set $E$, or weighted graphs $G(V,w,m)$ with vertex set $V$, (symmetric) edge weights $w : V\times V \to [0,\infty)$, and vertex measure $m : V \to (0,\infty)$. In the case of weighted graphs, the edge set is simply the set of pairs whose weight is positive. Simple graphs are recovered from the weighted version by setting $w(x,y) = 1$ for all edges $(xy) \in E$ and setting $m \equiv 1$.  

We will write most proofs for the case of simple graphs, but everything works identically for weighted graphs. We will point out when extra conditions on the weights and vertex measure are necessary.

We use $d(x,y)$ to denote the combinatorial distance of nodes $x,y \in V$. 
The Laplace operator is given by 
\[
\Delta f(x) = \frac 1 {m(x)} \sum_y w(x,y) (f(y)-f(x)),
\]
and the corresponding Bakry-Emery $\Gamma$ operator is defined as $\Gamma(f,g) = \frac{1}{2}(\Delta (fg) - f \Delta g - g \Delta f)$, so 
\[ 
\Gamma(f,g)(x) = \frac{1}{2 m(x)} \sum_y w(x,y) (f(y)-f(x))(g(y)-g(x)).
\]
As usual, $\Gamma f$ will stand for $\Gamma(f,f)$.

The (heat) semigroup generated by $\Delta$ will be denoted by $P_t$. For bounded $f:  V\to \R$ it satisfies the heat equation
\[ \partial_t P_t f = \Delta P_t f.\]
For further details about the heat semigroup, see e.g. \cite{keller2010unbounded, wojciechowski2008heat}. 


\subsection{The hybrid curvature}\label{sec:first_estimate}

In this section we explain the basic setup of our new curvature. The definition will be motivated by the following result~\cite[Theorem~2]{renesse2005transport} stating that
for smooth connected Riemannian manifolds $M$, a lower Ricci curvature bound $K$ is equivalent to
\[
|\nabla P_t f| \leq e^{-Kt} P_t |\nabla f|
\]
for all $f \in C_c^{\infty}(M)$, where $P_t$ is the heat semigroup. 

We aim for adapting this characterization to graphs. To do so, we have to introduce an appropriate gradient notion. This is the key definition that will allow the curvature to detect large (but fixed) neighborhoods of nodes.

\begin{definition}[$R$-Gradient]
Let $R \in \N$, and set $B_R(x) = \{ y : d(x,y)\leq R\}$. For any $f: V\to \R$ and $x \in V$ let us define \[
|\nabla_R f|(x) := \max_{d(x,y)\leq R} |f(y)-f(x)|.
\]
\end{definition}
We now derive necessary features of the geometry in order to have, for all bounded $f : V \to \R$, 
\begin{equation}\label{eq:linear_decay}
|\nabla_R P_t f| \leq e^{-Kt} P_t |\nabla_R f|.
\end{equation}
At $t=0$ the two sides of~\eqref{eq:linear_decay} are, of course, equal. Thus the derivative of the right-hand side has to be at least the derivative of the left-hand side.  Some care must be taken when computing the derivative, as the $\abs{ \nabla_R \, \cdot }$ is a maximum. However, at least the right derivative of both sides will exist and be computable. These derivatives are what we can then compare. \textbf{In the whole paper, derivative will always mean right derivative, unless otherwise noted.}  Fix $x \in V$. Since $B_R(x)$ is a finite set, there is an $\ep > 0$ such that $\abs{\nabla_R P_t f}(x)$ is attained at the same $y_0 \in B_R(x)$ for all $t \in [0,\ep]$ and such that $\sgn (P_t f(y) - P_t f (x)$ is constant on $(0,\ep]$ for any $y \in B_R(x)$. 

Let us fix such an $\ep > 0$ and let $y_0 \in N_R(x):=B_R(x)\setminus \{x\}$ such that \[ \forall t \in [0,\ep], \forall y \in N_R(x) :  \abs{\nabla_R P_t f}(x) = \abs{P_t f(y_0)-P_t f(x)} \geq  \abs{P_t f(y) - P_t f(x)}\] Then, at least at $t=0$, the derivatives can also be compared: $ \partial_t |P_t f(y_0) - P_t f(x)| \geq \partial_t \abs{P_t f(y) - P_t f(x)}$ for any $y \in N_R(x)$ such that $\abs{\nabla_R f}(x) = \abs{f(y)-f(x)}$. Further it is clear that  \[ \partial_t \abs{P_t f(y) - P_t f(x)} = \partial_t ((P_t f(y) - P_t f(x))\sgn(P_t f(y) - P_t f(x))) = (\Delta P_t f(y) - \Delta P_t f(x)) \sgn(f(y)-f(x)).\] Then for any $y \in N_R(x)$ such that $\abs{\nabla_R f}(x) = \abs{f(y)-f(x)}$, at $t=0$ we have
\begin{multline}\label{eq:partial_gradient} \partial_t \abs{\nabla_R P_t f}(x) = \partial_t |P_t f(y_0) - P_t f(x)| \geq \partial_t \abs{P_t f(y) - P_t f(x)}=\\ = \partial_t \left((P_t f(y) - P_t f(x))\sgn(f(y)-f(x))\right) = (\Delta P_t f(y) - \Delta P_t f(x))\sgn(f(y)-f(x)) \end{multline}
Hence, at $t=0$, by the comparison of the derivatives in~\eqref{eq:linear_decay} we get
\[
(\Delta f(y) - \Delta f(x))\sgn(f(y)-f(x)) \leq \partial_t |\nabla_R P_t f|(x) \leq \partial_t\left(e^{-Kt} P_t |\nabla_R f| \right)(x) = -K |\nabla_R f|(x) + \Delta |\nabla_R f|(x).
\]
So by assuming~\eqref{eq:linear_decay} we get that for all $f$ such that $|f(y)-f(x)|=|\nabla_R f|(x)$, the following inequality holds.
\begin{equation}\label{eq:linear_local}
\Delta  |\nabla_R f|(x) -  (\Delta f(y) - \Delta f(x))\sgn(f(y)- f(x)) \geq K  |\nabla_R f|(x)
\end{equation}

Here everything is homogeneous of degree 1 in $f$, so it suffices to consider functions where $\abs{\nabla_R f}(x) = 1$. The above calculations motivate the following definition of curvature.
\begin{definition}[Gradient-Ollivier curvature]\label{def:linear_local} For any $R \in \N$ and $x,y \in V$ we set 
\[
K_R(x,y) := \inf_{\substack{f : V \to \R \\ |\nabla_R f|(x)=1\\ |f(y)-f(x)| =  |\nabla_R f|(x)}}\Delta  |\nabla_R f|(x) -  (\Delta f(y) - \Delta f(x))\sgn(f(y)- f(x)).
\]
\end{definition}
Thus, if \eqref{eq:linear_decay} holds for all $f \in l^\infty(V)$, we see that $K_R(x,y) \geq K$ for all $x,y$ such that $d(x,y) \leq R$.  As we will see in Theorem~\ref{thm:linear_equivalent}, this is also a sufficient condition.

We will also show  that a lower bound on $K_R$ for $R=1$ is ``stronger'' than a lower bound on the classical Ollivier curvature $K^{Ol}$:

\begin{theorem}\label{thm:Ollivier_comparison}
For $x\sim y$, we have
\[
K_1^{Ol}(x,y) \geq K_1(x,y)
\]
\end{theorem}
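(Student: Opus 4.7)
The approach is to reduce $K_1^{Ol}$ via Kantorovich duality to an infimum of the same shape as $K_1$, and then compare the two objectives on the common feasible set. The first step is to invoke the standard gradient characterization of the Ollivier curvature for adjacent $x \sim y$,
\[
K_1^{Ol}(x,y) \;=\; \inf_{\substack{f : V \to \R\\ f \text{ is } 1\text{-Lipschitz}\\ f(y)-f(x) = 1}} \bigl(\Delta f(x) - \Delta f(y)\bigr),
\]
which follows from Kantorovich duality together with a Taylor expansion in the idleness parameter (as used e.g.\ in the M\"unch references cited in the introduction). I would begin by recording this identity in the exact normalization of the paper.

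The next observation is that the Ollivier feasible set is contained in that of $K_1(x,y)$. A globally $1$-Lipschitz $f$ satisfies $|\nabla_1 f|(x) \le 1$, while $f(y)-f(x)=1$ with $y \in N_1(x)$ forces $|\nabla_1 f|(x) = 1 = |f(y)-f(x)|$. Since $\sgn(f(y)-f(x)) = +1$ on this set, the $K_1$ objective evaluated on such an $f$ is
\[
\Delta|\nabla_1 f|(x) \,-\, \bigl(\Delta f(y)-\Delta f(x)\bigr) \;=\; \bigl(\Delta f(x)-\Delta f(y)\bigr) + \Delta|\nabla_1 f|(x),
\]
i.e.\ the Ollivier objective plus the correction term $\Delta|\nabla_1 f|(x)$.

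The heart of the argument is the pointwise inequality $\Delta|\nabla_1 f|(x) \le 0$ for any globally $1$-Lipschitz $f$ with $|\nabla_1 f|(x)=1$. This is immediate: for every neighbour $z$ of $x$, global Lipschitz continuity of $f$ yields $|\nabla_1 f|(z) = \max_{w\sim z}|f(w)-f(z)| \le 1 = |\nabla_1 f|(x)$, so every summand in
\[
\Delta|\nabla_1 f|(x) \;=\; \frac{1}{m(x)}\sum_z w(x,z)\bigl(|\nabla_1 f|(z) - |\nabla_1 f|(x)\bigr)
\]
is nonpositive. Combined with the previous paragraph, on the Ollivier feasible set the $K_1$ objective is at most the Ollivier objective, and since $K_1(x,y)$ is the infimum over a strictly larger set, taking infima gives $K_1(x,y) \le K_1^{Ol}(x,y)$.

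The main obstacle is not analytic but bookkeeping: one has to pin down the exact form of the dual characterization of $K_1^{Ol}$ under the paper's conventions for $\Delta$, $m$, $w$, and whichever flavour of Ollivier curvature is meant (combinatorial, lazy, or Lin--Lu--Yau). Once that template is in place the comparison itself is a one-line consequence of the monotonicity $|\nabla_1 f|(z) \le |\nabla_1 f|(x)$ forced by the global Lipschitz constraint.
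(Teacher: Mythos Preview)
Your argument is correct and is essentially the paper's own proof: both use the Kantorovich-dual characterization $K_1^{Ol}(x,y) = \inf_{\|\nabla f\|_\infty=1,\; f(y)-f(x)=1}\bigl(\Delta f(x)-\Delta f(y)\bigr)$, observe that any such $f$ is feasible for $K_1(x,y)$, and reduce the comparison to the one-line inequality $\Delta|\nabla_1 f|(x)\le 0$ coming from $|\nabla_1 f|(z)\le 1$ for all $z\sim x$. The only difference is cosmetic---the paper picks a minimizer while you phrase it as feasible-set containment---and your caveat about fixing the exact normalization of $K_1^{Ol}$ is well taken, since the paper itself leaves that implicit.
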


\subsection{Further curvature variants}\label{sec:variants}
Considering different gradient estimates, one can find different notions of curvature. In this paper we consider two such variants.

\subsubsection{Quadratic curvature}
The following is a quadratic version of the gradient estimate~\eqref{eq:linear_decay}:
\begin{equation} \label{eq:quadratic_decay}
\abs*{\nabla_R \sqrt{ P_t f}}^2 \leq e^{-2K t}  P_t \abs*{ \nabla_R \sqrt{f} }^2
\end{equation}

Let $y \in N_R(x)$ such that $\abs{\sqrt{f}(y) - \sqrt{f}(x)} = \abs{\nabla_R \sqrt{f}}(x)$. 
A calculation similar in spirit to the one in Section~\ref{sec:first_estimate} yields that the following inequality is a consequence of~\eqref{eq:quadratic_decay}.
\[ 2K \abs{\nabla_R \sqrt{f}}^2 \leq \Delta \abs{\nabla_R \sqrt{f}}^2  - (\sqrt{f}(y)-\sqrt{f}(x))\left(\frac{\Delta f}{\sqrt{f}}(y)- \frac{\Delta f}{\sqrt{f}}(x)\right) .\]
Denoting $g = \sqrt{f}$ for convenience, and normalizing so that $\abs{\nabla_R g}(x)=1$, we get the following definition.

\begin{definition}[Quadratic Gradient-Ollivier curvature]\label{def:quadratic_local}
Fix an integer $R > 0$. 
Let $d(x,y)  \leq R$. The \emph{quadratic Gradient-Ollivier curvature} on the pair $xy$ is given by
\[ K_R^q(x,y) = \frac{1}{2} \inf_{\substack{g >0 \\ \abs{\nabla_R g}(x)= 1 \\ \abs{g(y)-g(x)}=1}} \Delta \abs*{\nabla_R g}^2(x) - \sgn(g(y)-g(x))\left(\frac{\Delta g^2}{g}(y) - \frac{\Delta g^2}{g}(x)\right)\]
\end{definition}

The above argument then shows that~\eqref{eq:quadratic_decay} implies $K^q_R(x,y) \geq K$ for all $x,y$ such that $d(x,y) \leq R$.

\subsubsection{Exponential curvature}

A classical lower Ollivier curvature bound $K$ is characterized via
\begin{align}\label{eq:OllivierGradient}
\|\nabla_R P_t f\|_\infty \leq e^{-Kt}\|\nabla_R f\|_\infty
\end{align}
The idea of Exponential Ollivier curvature is to strengthen the gradient estimate to
\begin{align}\label{eq:exp_decay}
\|\nabla_R \log P_t f\|_\infty \leq e^{-Kt}\|\nabla_R \log f\|_\infty
\end{align}
for positive $f$. This is indeed stronger than (\ref{eq:OllivierGradient}) since for bounded $f$,
\begin{align*}
\|\nabla_R P_t f\|_\infty = \lim_{\eps \to 0} \frac 1 \eps \|\nabla_R \log (1 + \eps P_t f)\|_\infty 
&= 
\lim_{\eps \to 0} \frac 1 \eps \|\nabla_R \log P_t(1 + \eps f)\|_\infty
\\&\leq  e^{-Kt} \lim_{\eps \to 0} \frac 1 \eps \|\nabla_R \log (1 + \eps f)\|_\infty \\&= e^{-Kt}\|\nabla_R f\|_\infty
\end{align*}
where we used stochastic completeness to guarantee $P_t 1 = 1$.

Let $x,y$ s.t. $d(x,y)\leq R$ and $\log f(y) - \log f(x) = \|\nabla_R \log f\|$. A calculation similar in spirit to the one in Section~\ref{sec:first_estimate} yields that the following condition is a consequence of~\eqref{eq:exp_decay}.
\[
\frac{\Delta f(x)}{f(x)} - \frac{\Delta f(y)}{f(y)} \geq K \|\nabla_R \log f\|.
\]
Taking $f = \exp(g)$, and minimizing yields the following definition.

\begin{definition}[Exponential Gradient-Ollivier Curvature]\label{def:exp_local}
\[
K_R^e(x,y) = \inf_{r>0}\frac 1 r \cdot \inf_{\substack{g: V\to \R \\ |\nabla g\|=r \\ g(y)-g(x)=r}} \frac{\Delta \exp(g)}{\exp(g)}(x) -\frac{\Delta \exp(g)}{\exp(g)}(y) 
\]
\end{definition}
Remark that $K_R^e$ is not necessarily symmetric.
By the considerations above, the gradient estimate (\ref{eq:exp_decay}) implies $K_R^e(x,y) \geq K$.

\subsection{Gradient estimates}\label{sec:estimates}

It turns out that not only do the gradient estimates~\eqref{eq:linear_decay}, \eqref{eq:quadratic_decay}, \eqref{eq:exp_decay} imply that the respective curvatures are bounded from below, but also vice versa. In fact, the obtained lower bounds are equivalent to the corresponding gradient estimates.

\begin{theorem}\label{thm:linear_equivalent}
Let $K \in \R$. The following are equivalent:
\begin{enumerate}[(i)]
\item For all $x,y$ such that $d(x,y) \leq R$ we have $K_R(x,y) \geq K$.
\item For all positive $f \in \ell^\infty(V)$ 
\[ |\nabla_R P_t f| \leq e^{-Kt} P_t |\nabla_R f|.\]
\end{enumerate}
\end{theorem}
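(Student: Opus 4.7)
\textbf{Proof plan for Theorem~\ref{thm:linear_equivalent}.}

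The direction (ii)$\Rightarrow$(i) is essentially contained in the motivating calculation of Section~\ref{sec:first_estimate}. Given (ii), fix $x,y$ with $d(x,y)\le R$ and any $f$ normalized so that $|\nabla_R f|(x)=1=|f(y)-f(x)|$. The calculation in~\eqref{eq:partial_gradient} gives a lower bound on the right derivative of $|\nabla_R P_t f|(x)$ at $t=0$, while (ii) gives an upper bound. Comparing them yields
\[
\Delta|\nabla_R f|(x)-(\Delta f(y)-\Delta f(x))\sgn(f(y)-f(x))\ge K,
\]
i.e.\ the defining inequality of Definition~\ref{def:linear_local}. Taking the infimum over such $f$ yields $K_R(x,y)\ge K$.

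For the harder direction (i)$\Rightarrow$(ii), fix $t>0$, $x\in V$, and a bounded $f$. Write $u_s(z):=|\nabla_R P_s f|(z)$. The plan is to show that
\[
F(s):=e^{Ks}\,P_{t-s}(u_s)(x), \qquad s\in[0,t],
\]
is (right-)non-increasing. This immediately gives $F(t)\le F(0)$, which is exactly $|\nabla_R P_t f|(x)\le e^{-Kt}P_t|\nabla_R f|(x)$.

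Computing the right derivative, using $\partial_s P_{t-s}=-\Delta P_{t-s}$ and $[\Delta,P_{t-s}]=0$,
\[
F'(s)=e^{Ks}\,P_{t-s}\!\left[K\,u_s+\partial_s u_s-\Delta u_s\right](x).
\]
Since $P_{t-s}$ is a positivity-preserving operator, to conclude $F'(s)\le 0$ it suffices to establish the pointwise inequality
\begin{equation}\label{eq:plan-pointwise}
\partial_s u_s(z)\le \Delta u_s(z)-K\,u_s(z) \qquad (\forall z\in V,\ \forall s\in[0,t]).
\end{equation}
This is where (i) enters, and is the main technical point. Fix $z$ and $s_0$; since $B_R(z)$ is finite, there is an $\eps>0$ and a vertex $y_0\in N_R(z)$ that realizes $u_s(z)$ for all $s\in[s_0,s_0+\eps]$, with $\sigma:=\sgn(P_s f(y_0)-P_s f(z))$ constant on the same interval. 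Then on $[s_0,s_0+\eps]$, $u_s(z)=\sigma\,(P_s f(y_0)-P_s f(z))$, so the right derivative at $s_0$ is $\sigma(\Delta P_{s_0}f(y_0)-\Delta P_{s_0}f(z))$. Applying Definition~\ref{def:linear_local} to $g:=P_{s_0}f/u_{s_0}(z)$ (which is normalized and for which $y_0$ achieves the maximum) and using $K_R(z,y_0)\ge K$ from hypothesis (i) gives
\[
\Delta u_{s_0}(z)-\sigma(\Delta P_{s_0}f(y_0)-\Delta P_{s_0}f(z))\ge K\,u_{s_0}(z),
\]
which is exactly~\eqref{eq:plan-pointwise}.

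The main obstacle I anticipate is purely the bookkeeping of right derivatives: one must check that $u_s(z)$ is continuous in $s$, that its right derivative exists everywhere, and that $F$ being continuous with $F'\le 0$ (right derivative) in fact implies $F$ is non-increasing. All of this follows from the fact that $u_s(z)$ is a maximum of finitely many smooth functions of $s$, hence locally Lipschitz, combined with the standard real-analysis fact that a continuous function with nonpositive right Dini derivative is non-increasing. Local finiteness of $B_R(z)$, which is implicit in the setup, is what allows the argmax $y_0$ to be chosen locally constant.
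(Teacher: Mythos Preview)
Your proof is correct and follows essentially the same route as the paper's. The paper packages (i)$\Rightarrow$(ii) as Lemma~\ref{lem:Gincreasing}, defining $G_s := e^{-Ks} P_s |\nabla_R P_{t-s} f|$ and showing it is pointwise increasing via the same pointwise inequality you isolate in~\eqref{eq:plan-pointwise}; your $F(s)$ is just $e^{Kt}G_{t-s}$, so the two interpolations are the same up to time reversal.
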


\begin{theorem}\label{thm:quadratic_equivalent}
Let $K \in \R$. The following are equivalent:
\begin{enumerate}[(i)]
\item For all $x,y$ such that $d(x,y) \leq R$ we have $K^q_R(x,y) \geq K$.
\item For all $0 < f \in l^\infty(V)$ \[ \abs*{\nabla_R \sqrt{ P_t f}}^2 \leq e^{-2K t}  P_t \abs*{ \nabla_R \sqrt{f} }^2.\]
\end{enumerate}

\end{theorem}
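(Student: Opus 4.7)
The $(\text{ii}) \Rightarrow (\text{i})$ direction largely follows from the motivation spelled out just before Definition~\ref{def:quadratic_local}: differentiate the gradient estimate at $t = 0$, noting that for a fixed point $x$ and a small interval $t \in [0,\eps]$ we may choose the maximizer $y_0 \in N_R(x)$ of $|\sqrt{P_t f}(\cdot) - \sqrt{P_t f}(x)|$ to be constant (since $B_R(x)$ is finite). This yields exactly the quadratic curvature inequality at $(x, y_0)$. To recover the infimum in Definition~\ref{def:quadratic_local} over an arbitrary pair $(x,y)$ with $d(x,y)\leq R$, one chooses test functions $f=g^2$ whose positive square root $g$ achieves its $R$-gradient at $x$ precisely at $y$; this is always possible by prescribing $g$ on $B_R(x)$.

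For $(\text{i}) \Rightarrow (\text{ii})$, the plan is the standard semigroup-interpolation trick. Fix $x \in V$, $t > 0$, and a positive bounded $f$. Set $g_s := \sqrt{P_{t-s} f}$, $h_s := |\nabla_R g_s|^2$, and
\[ F(s) := e^{-2Ks} P_s h_s(x), \qquad s \in [0,t]. \]
Then $F(0) = |\nabla_R \sqrt{P_t f}|^2(x)$ and $F(t) = e^{-2Kt} P_t |\nabla_R \sqrt{f}|^2(x)$, so it suffices to show that $F$ is non-decreasing. Since $\Delta$ commutes with $P_s$, a direct computation gives
\[ F'(s) = e^{-2Ks} P_s\!\left[\Delta h_s + \partial_s h_s - 2K h_s\right](x), \]
and the task reduces to the pointwise inequality $\Delta h_s(y) + \partial_s h_s(y) \geq 2K h_s(y)$ for every $y \in V$.

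To verify this, fix $y$ and let $z_0 \in N_R(y)$ be an argmax realizing $|\nabla_R g_s|(y)$. Using $\partial_s g_s = -\Delta g_s^2/(2 g_s)$, one computes
\[ \partial_s h_s(y) = -(g_s(z_0) - g_s(y))\!\left(\tfrac{\Delta g_s^2}{g_s}(z_0) - \tfrac{\Delta g_s^2}{g_s}(y)\right). \]
Substituting this and using $g_s(z_0) - g_s(y) = \sgn(g_s(z_0) - g_s(y))\, |\nabla_R g_s|(y)$, the desired pointwise inequality becomes precisely the quadratic curvature inequality at $(y, z_0)$ applied to $g_s$, normalized by $|\nabla_R g_s|(y)$ via the homogeneity of Definition~\ref{def:quadratic_local}. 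Hypothesis (i) then supplies $K_R^q(y, z_0) \geq K$, which closes the step. The degenerate case $|\nabla_R g_s|(y) = 0$ is immediate: $h_s(y) = 0$ is both a spatial minimum (so $\Delta h_s(y) \geq 0$) and a minimum in $s$ of $h_s(y)$ (so $\partial_s h_s(y) \geq 0$).

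The main obstacles are technical rather than conceptual: (a) $h_s(y)$ is a maximum of finitely many smooth functions, so $\partial_s h_s$ must be interpreted as a right-derivative as in Section~\ref{sec:first_estimate}, exploiting the local constancy of the argmax; (b) one has to ensure $g_s > 0$ throughout $[0,t]$ to make sense of $\Delta g_s^2/g_s$, which is handled by assuming a positive lower bound on $f$ together with stochastic completeness, and then approximating general bounded positive $f$ by $f + \eps$; and (c) the interchange of $\partial_s$ with $P_s$ in the computation of $F'(s)$ needs to be justified by standard heat-semigroup estimates, which is routine for bounded inputs. Step (a) is the most delicate, as it is where the nonsmoothness of $|\nabla_R \cdot|$ interacts with the time dynamics, and is essentially the reason the definition of $K_R^q$ is engineered with the argmax $y$ explicitly built in.
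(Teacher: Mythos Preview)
Your proposal is correct and follows essentially the same approach as the paper: the direction $(ii)\Rightarrow(i)$ is obtained by differentiating the gradient estimate at $t=0$, and $(i)\Rightarrow(ii)$ by the semigroup interpolation $F(s)=e^{-2Ks}P_s\abs{\nabla_R\sqrt{P_{t-s}f}}^2$ and showing $\partial_s F\geq 0$ pointwise via the curvature bound. Your discussion of the technical points (a)--(c), particularly the right-derivative issue for the max and the degenerate case $\abs{\nabla_R g_s}(y)=0$, is in fact more careful than the paper's, which glosses over these matters.
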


\begin{theorem}\label{thm:exp_equivalent}
Let $K \in \R$. If $G$ is finite, then the following are equivalent:

\begin{enumerate}[(i)]
\item For all $x,y$ such that $d(x,y) \leq R$ we have $K^e_R(x,y) \geq K$.
\item For all positive $0 < f \in \ell^\infty(V)$ 
\[ \|\nabla_R \log P_t f\|_\infty \leq e^{-Kt}\|\nabla_R \log f\|_\infty.\] 
\end{enumerate}

\end{theorem}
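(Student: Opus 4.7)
The plan is to adapt the standard semigroup-differentiation technique that translates between curvature lower bounds and gradient estimates to the $\ell^\infty$ setting via one-sided derivatives. Finiteness of $G$ is used at two crucial points: since $f > 0$, the maximum principle gives $P_t f > 0$ for all $t \geq 0$, so $g_t := \log P_t f$ is smooth in $t$; and the supremum defining $\|\nabla_R \cdot\|_\infty$ is attained on the finite set of pairs at distance $\leq R$.

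\textbf{(ii) $\Rightarrow$ (i).} Fix $x, y$ with $d(x,y) \leq R$ and $g$ admissible for the infimum defining $K_R^e(x,y)$, i.e.\ $r := g(y) - g(x) = \|\nabla_R g\|_\infty > 0$. Setting $f := \exp(g)$, the pair $(x,y)$ realizes $\|\nabla_R \log f\|_\infty = r$, so for all small $t \geq 0$,
\[
\log P_t f(y) - \log P_t f(x) \;\leq\; \|\nabla_R \log P_t f\|_\infty \;\leq\; e^{-Kt}\, r,
\]
with equality at $t = 0$. Taking right-derivatives at $t=0$ sandwiches the derivative of the LHS, which equals $\frac{\Delta f(y)}{f(y)} - \frac{\Delta f(x)}{f(x)}$, from above by $-Kr$; rearranging, dividing by $r$, and taking the infimum over admissible $(g,r)$ yields $K_R^e(x,y) \geq K$.

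\textbf{(i) $\Rightarrow$ (ii).} Define $h(t) := \|\nabla_R \log P_t f\|_\infty$. As the pointwise maximum of finitely many smooth functions of $t$, $h$ is locally Lipschitz and has a right-derivative everywhere; the envelope identity gives $\partial_t^+ h(t) = \max_{(x_t, y_t)} \partial_t(g_t(y_t) - g_t(x_t))$, the max ranging over pairs achieving $h(t)$. Assuming $h(t) > 0$ (the case $h(t) = 0$ is trivial), each such pair satisfies $g_t(y_t) - g_t(x_t) = h(t) = \|\nabla_R g_t\|_\infty$, so the hypothesis $K_R^e(x_t, y_t) \geq K$ applied to $g_t$ with $r = h(t)$ gives
\[
\partial_t(g_t(y_t) - g_t(x_t)) = \frac{\Delta P_t f(y_t)}{P_t f(y_t)} - \frac{\Delta P_t f(x_t)}{P_t f(x_t)} \;\leq\; -K\, h(t).
\]
Hence $\partial_t^+ h(t) \leq -K h(t)$, so $\partial_t^+(e^{Kt} h(t)) \leq 0$; a continuous function on $[0,\infty)$ with everywhere nonpositive right-derivative is non-increasing, yielding $h(t) \leq e^{-Kt} h(0)$, which is (ii). The main obstacle is the careful bookkeeping for one-sided derivatives of the $\ell^\infty$-norm -- specifically, justifying the envelope identity for $\partial_t^+ h$ and the passage from $\partial_t^+ h \leq -K h$ (with $h$ merely continuous) to exponential decay -- both standard but relying essentially on finiteness of $G$.
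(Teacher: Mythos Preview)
Your proposal is correct and follows essentially the same route as the paper: both directions proceed by differentiating $\|\nabla_R \log P_t f\|_\infty$ in $t$, using the envelope identity for the right-derivative of a finite maximum (finiteness of $G$ ensuring the maximum is attained), bounding via the definition of $K_R^e$, and integrating the resulting differential inequality. Your write-up is in fact slightly more careful than the paper's in flagging the one-sided derivative issues and the passage from $\partial_t^+(e^{Kt}h(t))\leq 0$ to monotonicity, but the underlying argument is the same.
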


In the case of non-negative gradient- (or quadratic gradient-) curvature we can still get a decaying estimate for $\abs{\nabla_R P_t f}$ (or $\abs{\nabla_R \sqrt{P_t f}}$ respectively), as long as the graph has a uniform degree bound. Let $\D$ denote the largest degree in $G$. 

\begin{theorem}\label{thm:nonegative_decay} Let $R \in \N$. 
\begin{enumerate}[(i)]
\item Suppose $K_R(x,y) \geq 0$ for all $x,y \in V$ such that $d(x,y)\leq R$ Then 
\[
|\nabla_R P_t f|^2 
\leq
\frac{\|f\|_\infty^2}{t} \cdot 2R \D^{R-1}
\]
\item Suppose $K^q_R(x,y) \geq 0$ for all $x,y \in V$ such that $d(x,y)\leq R$. Then
 \[
|\nabla_R \sqrt{P_t} f|^2
\leq
\frac{\norm{f \log f}_\infty}{t} \cdot 2R \Deg_{\max}^{R-1}
\]
\end{enumerate}
\end{theorem}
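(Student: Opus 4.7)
The plan is to mimic the classical Bakry--Emery argument for nonnegative curvature: combine a semigroup monotonicity with a variance (respectively entropy) identity, bridged by a combinatorial bound that recovers $|\nabla_R u|^2$ from a local sum of $\Gamma(u)$.

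For part (i), first apply Theorem~\ref{thm:linear_equivalent} with $K=0$ to $u=P_{t-s}f$: the gradient estimate $|\nabla_R P_t f|(x) = |\nabla_R P_s(P_{t-s}f)|(x) \le P_s |\nabla_R P_{t-s} f|(x)$ holds for $s\in[0,t]$; squaring and applying Jensen gives $|\nabla_R P_t f|^2(x) \le P_s(|\nabla_R P_{t-s} f|^2)(x)$, whence
\[ t\,|\nabla_R P_t f|^2(x) \le \int_0^t P_s\bigl(|\nabla_R P_{t-s} f|^2\bigr)(x)\,ds. \]
Next, a Cauchy--Schwarz estimate along a shortest path of length $r\le R$ from $y$ to a maximizer $y^* \in B_R(y)$, together with $(u(x_{i+1})-u(x_i))^2 \le 2\Gamma(u)(x_i)$, yields the combinatorial inequality
\[ |\nabla_R u|^2(y) \le 2R \sum_{a\in B_{R-1}(y)} \Gamma(u)(a). \]
Substituting this and swapping summations via $a \in B_{R-1}(y) \Leftrightarrow y \in B_{R-1}(a)$,
\[ \int_0^t P_s\Bigl(\sum_{a\in B_{R-1}(\cdot)}\Gamma(P_{t-s}f)(a)\Bigr)(x)\,ds = \int_0^t \sum_z p_s(x,z) \sum_{a\in B_{R-1}(z)} \Gamma(P_{t-s}f)(a) \, ds. \]
The inner sum has at most $|B_{R-1}(z)| \le \D^{R-1}$ terms, and combining this with the variance identity $\int_0^t P_s \Gamma(P_{t-s}f)(z)\,ds = \tfrac12(P_t f^2 - (P_t f)^2)(z) \le \tfrac12\|f\|_\infty^2$ bounds the full double integral by $\D^{R-1}\|f\|_\infty^2$, yielding $t\,|\nabla_R P_t f|^2(x) \le 2R\,\D^{R-1}\|f\|_\infty^2$.

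Part (ii) follows the same template with three substitutions: use Theorem~\ref{thm:quadratic_equivalent} at $K=0$ for the monotonicity of $s\mapsto P_s(|\nabla_R \sqrt{P_{t-s}f}|^2)(x)$; apply the combinatorial inequality to $\sqrt{P_{t-s}f}$; and replace the variance identity by the entropy identity
\[ \int_0^t P_s \mathcal{E}(P_{t-s}f)(z)\,ds = \bigl(P_t(f\log f) - P_tf\cdot\log P_tf\bigr)(z) \le \|f\log f\|_\infty, \]
where $\mathcal{E}(u) := \Delta(u\log u) - (\log u+1)\Delta u \ge 0$. The needed pointwise estimate $2\Gamma(\sqrt u) \le \mathcal{E}(u)$ reduces to the elementary inequality $(\sqrt a - \sqrt b)^2 \le a\log(a/b) - a + b$ for $a,b > 0$, which one checks by calculus (minimum ratio equals $1$ as $a/b \to 0$).

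The main technical obstacle is in the final assembly: the effective measure $a\mapsto P_s\mathbf{1}_{B_{R-1}(a)}(x)$ is both $s$- and $x$-dependent, so the sum over $a$ and the time integral must be rearranged carefully to line up with the pointwise variance (respectively entropy) identity. Performing the inner $a$-sum first for each $z$ and bounding it by $\D^{R-1}$ evaluations of $\Gamma$ (respectively $\mathcal{E}$) extracts the combinatorial factor cleanly and leaves a single application of the identity---this is precisely the mechanism producing the $2R\,\D^{R-1}$ constant in the statement.
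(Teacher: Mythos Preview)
Your overall architecture---monotonicity from the curvature assumption, a path-based Cauchy--Schwarz to reduce $|\nabla_R u|^2$ to a neighborhood sum of $\Gamma u$, and a variance/entropy identity to close---is exactly the paper's. The gap is in the assembly step, and it is real.

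After your combinatorial bound you are left with
\[
\int_0^t P_s\bigl(S\,\Gamma(P_{t-s}f)\bigr)(x)\,ds,
\qquad
S g(y) := \sum_{a\in B_{R-1}(y)} g(a).
\]
The variance identity you want to invoke is $\int_0^t P_s\Gamma(P_{t-s}f)(x)\,ds = \tfrac12\bigl(P_t f^2-(P_tf)^2\bigr)(x)$, which is a statement about the \emph{single} point $x$ with the heat kernel weight $p_s(x,\cdot)$ sitting on the same variable as $\Gamma$. In your expression the weight $p_s(x,z)$ is attached to $z$ while $\Gamma$ is evaluated at $a\in B_{R-1}(z)$. Swapping to $\sum_a \Gamma(P_{t-s}f)(a)\bigl(\sum_{z\in B_{R-1}(a)} p_s(x,z)\bigr)$ gives an $s$-dependent measure that is not $p_s(x,a)$, so the identity does not apply; and ``bounding the inner $a$-sum by $\D^{R-1}$ evaluations of $\Gamma$'' only yields $\D^{R-1}\|\Gamma(P_{t-s}f)\|_\infty$, whose time integral is not controlled (it grows like $t$). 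The operator $S$ simply does not commute with $P_s$, and no rearrangement of finite sums repairs this. (There is also a smaller slip: $|B_{R-1}(z)|\le \D^{R-1}$ is false already for $R=2$, since $|B_1(z)|=\deg(z)+1$.)

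The paper's device is to replace $S$ by $A^{R-1}$ where $A:=\Delta+\D$. Being a polynomial in $\Delta$, the operator $A$ commutes with $P_s$, and it satisfies $\|A^{R-1}g\|_\infty\le \D^{R-1}\|g\|_\infty$ for $g\ge 0$. The same path argument you give proves the analogue of your combinatorial bound, namely $|\nabla_R f|^2 \le (2R/Q_{\min}^R)\,A^{R-1}\Gamma f$ (Lemma~\ref{lem:GammaNabla}). One then sets $H_s := A^{R-1}P_s\bigl[(P_{t-s}f)^2\bigr]$, computes $\partial_s H_s = 2\,A^{R-1}P_s\Gamma(P_{t-s}f) = 2\,P_s A^{R-1}\Gamma(P_{t-s}f)$, applies the combinatorial bound \emph{inside} $P_s$, uses the monotonicity of $G_s = P_s|\nabla_R P_{t-s}f|$ to pull out $G_0$, and integrates. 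The endpoint $H_t = A^{R-1}P_t(f^2)$ is bounded by $\D^{R-1}\|f\|_\infty^2$, producing the stated constant. Part (ii) is identical with $H_s = A^{R-1}P_s\bigl(P_{t-s}f\log P_{t-s}f\bigr)$ and your inequality $2\Gamma(\sqrt u)\le \mathcal{E}(u)$.
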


Using Theorem~\ref{thm:linear_decay} (i), one can establish Buser's inequality following the arguments of \cite{liu2014eigenvalue} and the Harnack inequality
\[
|\nabla_R f|^2 \leq 2eR \D^{R-1} \lambda \|f\|_\infty^2
\]
for eigenfunctions $f$ to the eigenvalue $\lambda$ of $-\Delta$ by using $P_t f = e^{-\lambda t} f$ and choosing $t=1/\lambda$.
\subsection{Curvature bounds}\label{sec:nonnegative}

In this section we describe the combinatorial characterization of non-negative curvature for each of the variants. Somewhat surprisingly these turn out to be the same, even though the numerical values of the curvatures may be different.

The following combinatorial definition plays a crucial role in characterizing non-negative curvature.

\begin{definition}[$R$-transport map]
We call a function $\phi: B_1(y)\setminus B_R(x) \to B_1(x)$ a $y\to x$ $R$-transport map, if it's injective and satisfies $d(z,\phi(z))\leq R$ for all $z \in B_1(y)\setminus B_R(x)$.

More generally, we allow transport maps to be defined on a superset of $B_1(y) \setminus B_R(x)$: Suppose there is a set $A$ such that $B_1(y)\setminus B_R(x) \subset A \subset B_1(y)$. Then a function $\phi: A \to B_1(x)$ is a $y \to x$ $R$-transport map if it is injective and satisfies $d(z,\phi(z)) \leq R$ for all $z \in A$. 

We define the \emph{defect} of an $R$-transport map $\phi : A \to B_1(x)$ as $\dfct(\phi) = \abs{B_1(x) \setminus (\phi(A) \cup B_R(y))}$. Finally we let $\dfct(y \to x) = \inf\{ \dfct(\phi) | \phi \mbox{ is an $y\to x$ $R$-transport map}\}$. Clearly, $\dfct(y\to x) \geq 0$. If there are no $y\to x$ $R$-transport maps, we interpret the defect as $\infty$. 
\end{definition}

\begin{remark}\label{rem:equivalent}
If $\phi$ is a $y\to x$ $R$-transport map such that $\dfct(\phi)=0$, then $\phi^{-1}$ is an $x\to y$ $R$-transport map, and $\dfct(\phi^{-1})=0$ as well. Hence $\dfct(y\to x) =0$ implies that $\dfct(x\to y)=0$.
\end{remark}

Using the terminology of transport maps, our main results regarding non-negative curvature are the following.

\begin{theorem}\label{thm:combinatorial}~
\begin{enumerate}[(i)]
\item \label{item:infinity} If there is no $y\to x$ $R$-transport map, then $K_R(x,y) = K_R^q(x,y) = K^e_R(x,y) = -\infty$.
\item \label{item:lowerbound} If there is a $y\to x$ $R$-transport map, then $K_R(x,y) \geq -\dfct(y \to x), K_R^e(x,y) \geq -\dfct(y\to x)$, and $K^q_R(x,y) \geq -3/2 \dfct(y\to x)$.
\item \label{item:equivalence}The following are equivalent
\begin{enumerate}
\item Both $y\to x$ and $x \to y$ $R$-transport maps exist.
\item $\dfct(y \to x) = 0$.
\item $K_R(x,y) \geq 0$ and $K_R(y,x) \geq 0$.
\item $K_R^q(x,y) \geq 0$ and $K_R^q(y,x) \geq 0$.
\item $K_R^e(x,y) \geq 0$ and $K_R^e(y,x) \geq 0$.
\end{enumerate} 
\end{enumerate}
\end{theorem}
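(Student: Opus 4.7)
My plan is to prove (ii) first, then (i) by a direct test-function construction, and finally deduce (iii) by cycling (i) and (ii) together with one combinatorial lemma about merging transport maps in both directions.

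For (ii), I would take a competitor $f$ in Definition~\ref{def:linear_local}, normalize $f(x)=0$ and $f(y)=1$ (absorbing the sign by $f\mapsto -f$), and expand the three Laplacians so that the curvature expression rewrites as
\[
\sum_{x'\in B_1(x)}\bigl(|\nabla_R f|(x')+f(x')\bigr)-\sum_{z\in B_1(y)} f(z)+(\deg y-\deg x).
\]
Given any transport map $\phi:A\to B_1(x)$, I split $\sum_{z\in B_1(y)} f(z)$ into sums over $A$ and over $B_1(y)\setminus A\subseteq B_R(x)$. On $B_R(x)$ we have $f\le 1$ from $|\nabla_R f|(x)=1$, and on $A$ the telescoping estimate $f(z)\le|\nabla_R f|(\phi(z))+f(\phi(z))$ pushes the sum onto $\phi(A)\subseteq B_1(x)$. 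What remains on $B_1(x)\setminus\phi(A)$ is controlled by the pointwise bounds $|\nabla_R f|(x')+f(x')\ge\mathbf{1}_{x'\in B_R(y)}$ (from $|\nabla_R f|(x')\ge|1-f(x')|$ when $x'\in B_R(y)$) and $\ge 0$ in general. After simplification the leftover is exactly $-\dfct(\phi)$, and minimizing over $\phi$ gives $K_R(x,y)\ge-\dfct(y\to x)$. The quadratic and exponential variants follow the same template with the substitutions $g=\sqrt f$ or $g=\log f$; the factor $3/2$ in the quadratic estimate arises from the extra $\Gamma g$ terms in $\Delta g^2=2g\Delta g+2\Gamma g$.

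For (i), suppose no $y\to x$ $R$-transport map exists. Hall's theorem applied to the bipartite graph on $B_1(y)\setminus B_R(x)$ and $B_1(x)$ with edges for distance $\le R$ gives a Hall-violating set $S\subseteq B_1(y)\setminus B_R(x)$ whose bipartite neighborhood $N(S)\subseteq B_1(x)$ satisfies $|N(S)|<|S|$. Take the family of test functions $f_L$ defined by $f_L(x)=0$, $f_L(y)=1$, $f_L\equiv L$ on $S$, and $f_L\equiv 0$ elsewhere. Since $S\cap B_R(x)=\emptyset$, the normalization $|\nabla_R f_L|(x)=1$ persists, and the $L$-linear part of the curvature expression in Definition~\ref{def:linear_local} has coefficient $|N(S)|/m(x)-|S|/m(y)$, which is strictly negative (in the simple-graph setting; the weighted case uses a weighted version of Hall). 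Letting $L\to\infty$ then drives $K_R(x,y)$ to $-\infty$. The quadratic and exponential variants are handled by applying the same construction to $f_L=g_L^2$ and $f_L=e^{g_L}$ respectively.

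For (iii), (b)$\Rightarrow$(c), (d), (e) follows from (ii) together with Remark~\ref{rem:equivalent}, which converts $\dfct(y\to x)=0$ into $\dfct(x\to y)=0$; the contrapositive of (i) gives any of (c), (d), (e)$\Rightarrow$(a). The only nontrivial implication is (a)$\Rightarrow$(b). Given matchings $M_y,M_x$ in the bipartite ``distance $\le R$'' graph between $B_1(y)$ and $B_1(x)$ saturating $B_1(y)\setminus B_R(x)$ and $B_1(x)\setminus B_R(y)$ respectively, decompose $M_y\cup M_x$ into its components (alternating paths and even cycles of maximum degree two). On each component, choose which of the two matchings' edges to keep so that every needed vertex ($\in B_1(y)\setminus B_R(x)$ on the left or $\in B_1(x)\setminus B_R(y)$ on the right) is covered; a parity case-analysis on path components shows that if both endpoints were simultaneously needed, the matching originally responsible for the missed endpoint would have to extend the path, a contradiction. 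Read as a $y\to x$ map, the resulting matching has image containing $B_1(x)\setminus B_R(y)$ and hence defect zero. The main obstacle I anticipate is the algebraic bookkeeping in (ii) for the quadratic and exponential variants: obtaining the analogue of the clean telescoping identity that yields the stated constants. The matching-merging step in (iii) is standard but needs the parity case analysis to be carried out carefully.
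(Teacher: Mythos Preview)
Your plan is essentially the paper's own strategy: part~(ii) via the transport-map splitting of the three Laplacians, part~(i) via Hall's theorem and an unbounded family of test functions, and part~(iii) by cycling through (i), (ii), Remark~\ref{rem:equivalent}, and a matching-merging argument. The bookkeeping you outline for $K_R$ in (ii) is correct and matches the paper's computation line by line (your ``telescoping'' inequality $f(z)\le|\nabla_R f|(\phi(z))+f(\phi(z))$ is exactly what the paper uses for $z\in A$), and the matching-merging step in (iii) is the same standard fact the paper invokes.

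There is, however, one genuine gap in your treatment of part~(i) for the exponential curvature $K_R^e$. The constraint in Definition~\ref{def:exp_local} is $\|\nabla_R g\|_\infty=r$ together with $g(y)-g(x)=r$: this is a \emph{global} Lipschitz-type bound, not merely a condition at $x$. Your test function $f_L$ (or $g_L=\log f_L$, after fixing positivity) takes a large value on the Hall set $S$ and small values elsewhere, so $\|\nabla_R g_L\|_\infty$ is of order $\log L$ while $g_L(y)-g_L(x)$ stays bounded; the two cannot be made equal without destroying the separation between $|N(S)|$ and $|S|$ in the leading coefficient. If you try to rescale so that $g_L(y)-g_L(x)=\|\nabla_R g_L\|_\infty$, the leading coefficient becomes $\deg(x)-|S|$ rather than $|N(S)|-|S|$, and Hall's condition no longer forces this to be negative. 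The fix is to use a three-level test function graded by distance to the Hall set $A$: set $g(v)=2r$ on $A$, $g(v)=r$ for $1\le d(v,A)\le R$, and $g(v)=0$ for $d(v,A)>R$. This satisfies $\|\nabla_R g\|_\infty=r=g(y)-g(x)$, and now the $e^r$-coefficient in $\frac{\Delta e^g}{e^g}(x)-\frac{\Delta e^g}{e^g}(y)$ is exactly $|N_R(A)|-|A|<0$. For $K_R^q$ your construction does work once you replace $0$ by a fixed $\varepsilon>0$ at $x$ (to meet the $g>0$ requirement) with $\varepsilon$ chosen so that $(1+\varepsilon)|N(S)|<|S|$; this is possible since $|N(S)|<|S|$ are integers.
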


\begin{corollary} The hexagonal lattice has curvature $K_2 \geq 0$. First, it's sufficient to check $K_2(x,y) \geq 0$ for $x,y$ exactly distance 2 apart, otherwise $B_1(y) \setminus B_2(x)$ is empty. If $d(x,y)=2$ then ``translation by $\overrightarrow{yx}$'' defines a 2-transport map from $y$ to $x$. 
\end{corollary}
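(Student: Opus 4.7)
The plan is to invoke Theorem~\ref{thm:combinatorial}~(iii): the equivalence of conditions (3a) and (3c) reduces non-negativity of $K_2(x,y)$ to producing a $y \to x$ $2$-transport map of defect $0$, since Remark~\ref{rem:equivalent} then supplies the reverse $x \to y$ map automatically. Because $K_2(x,y)$ is only constrained for pairs with $d(x,y) \leq 2$, I only need to handle the three cases $d(x,y) \in \{0,1,2\}$.

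When $d(x,y) \leq 1$, the triangle inequality gives both $B_1(y) \subset B_2(x)$ and $B_1(x) \subset B_2(y)$. The empty function $\phi : \emptyset \to B_1(x)$ is therefore a valid $2$-transport map, and its defect $|B_1(x) \setminus B_2(y)|$ vanishes. The substantive case is $d(x,y) = 2$.

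Here I follow the translation hinted at in the statement. Since the hexagonal lattice is bipartite, $d(x,y) = 2$ puts $x$ and $y$ in the same bipartition class. Fixing a plane embedding, each class is a coset of a common rank-$2$ sublattice $\Lambda \subset \R^2$, so translation $T_g$ by $g := \overrightarrow{yx} \in \Lambda$ is a graph automorphism carrying $y$ to $x$. Taking $\phi := T_g|_{B_1(y)}$ yields a bijection onto $B_1(x)$, which immediately gives $\dfct(\phi) = 0$. The one genuine check is $d(z, \phi(z)) \leq 2$ for every $z \in B_1(y)$: for $z = y$ this is just $d(y,x) = 2$, while for a neighbor $z$ of $y$ (which lies in the opposite bipartition class) a short direct enumeration shows that the set of lattice vectors in $\Lambda$ realized by graph-distance-$2$ pairs is the same in both bipartition classes, so $g$ realizes graph distance $2$ between same-type vertices of $z$'s class as well.

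The argument has no deep obstacle; its only real content is the symmetry observation that $\Lambda$-translations are graph automorphisms and that the distance-$2$ displacement sets of the two bipartition classes agree. Both features fail at $R = 1$, where any transport map would have to cross the bipartition and no same-class translation is available---which is precisely why the hexagonal lattice first registers as non-negatively curved only at the $R = 2$ scale.
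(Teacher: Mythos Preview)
Your proof is correct and follows the same approach as the paper: reduce to the case $d(x,y)=2$ via Theorem~\ref{thm:combinatorial}, then use the lattice translation $T_{\overrightarrow{yx}}$ as the transport map. You have simply filled in the details the paper leaves implicit---most notably the verification that $d(z,\phi(z))\leq 2$ for the neighbors $z$ of $y$, which you handle via the observation that the distance-$2$ displacement vectors agree across the two bipartition classes.
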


\section{Proofs}\label{sec:proofs}

\subsection{Gradient-Ollivier curvature}

\begin{proof}[Proof of Theorem~\ref{thm:linear_equivalent}] We have seen in Section~\ref{sec:first_estimate} that $(ii) \Rightarrow (i)$.
The converse implication 
follows from Lemma~\ref{lem:Gincreasing} below, by comparing $G(0)$ and $G(t)$. 
\end{proof}

\begin{lemma}\label{lem:Gincreasing}
Suppose $K_R(x,y) \geq K$. Let $t >0$ fixed.
Let us define the function $G_s: V \to \R$ via $G_s := e^{-Ks} P_s|\nabla_R P_{t-s} f|$.  Then $G_s$ is pointwise increasing for $s \in [0,t]$.
\end{lemma}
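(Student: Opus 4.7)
The plan is to prove monotonicity of $G_s$ by showing the right derivative $\partial_s G_s(z)$ is nonnegative pointwise; since the heat kernel is smooth in $s$ and $|\nabla_R \cdot |$ is a maximum over the finite set $B_R(\cdot)$, $s \mapsto G_s(z)$ will be continuous, so nonnegative right derivative suffices. Write $h_s := P_{t-s}f$ and $g_s := |\nabla_R h_s|$, so that $G_s = e^{-Ks} P_s g_s$. The heat equation gives $\partial_s h_s = -\Delta h_s$.

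Applying the product rule in $s$ and using that $P_s$ commutes with $\Delta$, I expect to obtain
\[
\partial_s G_s \;=\; e^{-Ks}\, P_s\bigl(\Delta g_s + \partial_s g_s - K g_s\bigr).
\]
Since $P_s$ preserves positivity, the task reduces to showing the pointwise inequality $\Delta g_s(w) + \partial_s g_s(w) - K g_s(w) \geq 0$ for every $w \in V$.

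To evaluate $\partial_s g_s(w)$, I would fix some $y_0 = y_0(w,s) \in B_R(w)$ realizing $|h_s(y_0) - h_s(w)| = g_s(w)$ and chosen so that $y_0$ persists as a maximizer and $\sgn(h_{s'}(y_0) - h_{s'}(w))$ is constant on some interval $[s, s+\epsilon]$. This is possible because $B_R(w)$ is finite and each $s' \mapsto h_{s'}(y) - h_{s'}(w)$ is real-analytic, so among the finitely many maximizers at $s$ at least one must remain dominant on the right. Then
\[
\partial_s g_s(w) \;=\; -\sgn(h_s(y_0) - h_s(w))\bigl(\Delta h_s(y_0) - \Delta h_s(w)\bigr),
\]
and substituting back, the required pointwise inequality becomes
\[
\Delta |\nabla_R h_s|(w) - \sgn(h_s(y_0) - h_s(w))\bigl(\Delta h_s(y_0) - \Delta h_s(w)\bigr) \;\geq\; K\, |\nabla_R h_s|(w),
\]
which is exactly the curvature bound $K_R(w, y_0) \geq K$ applied to the function $h_s$. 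This application is legitimate because $|h_s(y_0) - h_s(w)| = |\nabla_R h_s|(w)$ and the expression inside the infimum of Definition~\ref{def:linear_local} is homogeneous of degree $1$ in the test function.

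The main obstacles are technical rather than conceptual. First, justifying $\partial_s(P_s g_s) = P_s(\Delta g_s + \partial_s g_s)$ on a possibly infinite graph: this rests on the uniform bound $\|g_s\|_\infty \leq 2\|f\|_\infty$, which enables dominated convergence to interchange the right derivative in $s$ with the sum defining $P_s$. Second, the degenerate case $g_s(w) = 0$, where the curvature definition does not directly apply; but here $w$ is a minimum of the nonnegative function $g_s$, so $\Delta g_s(w) \geq 0$, the right derivative of the max is automatically nonnegative, and $K g_s(w) = 0$, so the inequality holds for free. Neither point requires new ideas beyond the derivative computation already carried out in Section~\ref{sec:first_estimate}.
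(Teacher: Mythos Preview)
Your proposal is correct and follows essentially the same approach as the paper's proof: both differentiate $G_s$ via the semigroup identity $\partial_s(e^{-Ks}P_s g_s) = e^{-Ks}P_s(\Delta g_s + \partial_s g_s - K g_s)$, reduce by positivity of $P_s$ to a pointwise inequality, and verify that inequality by applying the curvature hypothesis to $h_s = P_{t-s}f$ at a vertex $y_0$ realizing $|\nabla_R h_s|(w)$. Your treatment is in fact slightly more careful than the paper's on the technical points (selecting a persisting maximizer for the right derivative, the degenerate case $g_s(w)=0$, and the interchange of $\partial_s$ with $P_s$ on infinite graphs).
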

\begin{proof}
Let us denote $\underline \partial_s G_s := \liminf_{h \to 0} \frac{G_{s+h}-G_s}h$.
We calculate 
\begin{align*}
\underline \partial_s G_s = -K\cdot G_s + e^{-Ks}\left( P_s ( \underline \partial_s|\nabla_R P_{t-s} f|) + 
\Delta P_s |\nabla_R P_{t-s} f| \right).
\end{align*}
By \eqref{eq:partial_gradient}, for any $y \in N_R(x)$ such that $\abs{\nabla_R P_{t-s}f}(x) = \abs{P_{t-s}f(y)-P_{t-s}f(x)}$, we have
\begin{align*}
\underline \partial_s|\nabla_R P_{t-s} f|(x) & \geq \partial_s \abs{P_{t-s}f(y) - P_{t-s}f(x)} \\&= (\partial_s P_{t-s}f(y) - \partial_s P_{t-s}f(x))\sgn(P_{t-s}f(y)-P_{t-s}f(x)) \\&= -(\Delta P_{t-s}f(y) - \Delta P_{t-s}f(x))\sgn(P_{t-s}f(y)-P_{t-s}f(x))
\\&\geq  |\nabla_R P_{t-s} f|(x) K_R(x,y) - \Delta |\nabla_R P_{t-s} f|(x)
\\&\geq K |\nabla_R P_{t-s} f|(x) -  \Delta |\nabla_R P_{t-s} f|(x).
\end{align*}
Combining with the above equation and using $P_s \Delta |\nabla_R P_{t-s} f|= \Delta P_s |\nabla_R P_{t-s} f|$ yields $\underline \partial_s G(s) \geq 0$ proving the claim.
\end{proof}

We now want to prove that under non-negative curvature,
\[
|\nabla_R P_t f|^2 \leq \frac C t \|f\|_\infty^2.
\]
To do so, we will upper bound $|\nabla_R f|^2$ by $\Gamma f$. However, this cannot work pointwise if $R \geq 2$ since $\Gamma$ only depends the one-step neighborhood. The idea to overcome this issue is to sum up $\Gamma f$ over a large enough neighborhood.

\begin{definition}[The averaging operator $A$]
Let $G=(V,w,m)$ be a graph. We write $Q(x,y):=w(x,y)/m(x)$.
We have 
\[
\Delta f(x) = \frac 1 {m(x)} \sum_y w(x,y) (f(y)-f(x)) = 
\sum_y Q(x,y) (f(y)-f(x)).
\]
We write $\Deg(x):=\sum_y Q(x,y)$ and $\Qmin := \inf_{x\sim y} Q(x,y)$.
We define $A:= \Delta + \D$.
\end{definition}
The operator $A$ can be seen as an averaging operator over the ball of radius one.
Note that $Af \geq 0$ whenever $f \geq 0$ and $\|A f\|_\infty \leq \D \|f\|_\infty$. Moreover, $P_t \circ A = A \circ P_t$. Now we are ready to bound $\abs{ \nabla_R f}^2$ in terms of $\Gamma f$ on the $R$-neighborhood.

\begin{lemma}\label{lem:GammaNabla} For all functions $f:V\to \R$, one has
\[
|\nabla_R f|^2 \leq 2R/\Qmin^R A^{R-1} \Gamma f.
\] 
\end{lemma}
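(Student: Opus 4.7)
The plan is to estimate $\abs{\nabla_R f}^2(x)$ by telescoping along a geodesic from $x$ to the vertex realizing the maximum, bounding each edge-difference by a local $\Gamma f$, and then dominating the resulting sum of $\Gamma f$ values along the geodesic by $A^{R-1}\Gamma f(x)$ via a walk-expansion of $A^{R-1}$.

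Fix $x$ and choose $y \in B_R(x)$ realizing $\abs{\nabla_R f}(x)$; set $r := d(x,y) \leq R$ and fix a geodesic $x = x_0, x_1, \ldots, x_r = y$. Cauchy--Schwarz applied to the telescope $f(y)-f(x) = \sum_{i=0}^{r-1}(f(x_{i+1})-f(x_i))$ gives $(f(y)-f(x))^2 \leq r \sum_{i=0}^{r-1}(f(x_{i+1})-f(x_i))^2$, and the definition of $\Gamma$ immediately yields $(f(x_{i+1})-f(x_i))^2 \leq 2\Gamma f(x_i)/Q(x_i,x_{i+1}) \leq 2\Gamma f(x_i)/\Qmin$. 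Using $r \leq R$, these combine to
\[
\abs{\nabla_R f}^2(x) \leq \frac{2R}{\Qmin}\sum_{i=0}^{r-1}\Gamma f(x_i).
\]

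The remaining step, which is the main point, is to show $\sum_{i=0}^{r-1}\Gamma f(x_i) \leq \Qmin^{-(R-1)}\, A^{R-1}\Gamma f(x)$. Setting $\tilde Q(u,v) := Q(u,v)$ for $v \sim u$ and $\tilde Q(u,u) := \D - \Deg(u) \geq 0$, one has $Af(u) = \sum_v \tilde Q(u,v)f(v)$, so iterating gives $A^{R-1}\Gamma f(x) = \sum_\gamma \tilde Q(\gamma)\,\Gamma f(\mathrm{end}(\gamma))$, where $\gamma$ ranges over length-$(R-1)$ walks from $x$ and $\tilde Q(\gamma)$ is the product of its step weights. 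The plan is, for each $i \in \{0,\dots,r-1\}$, to exhibit a distinct walk $\gamma_i$ of length $R-1$ that begins with the initial geodesic segment $x_0, x_1, \ldots, x_i$, uses only real edges of $G$ (so $\tilde Q(\gamma_i) \geq \Qmin^{R-1}$), and ends at $x_i$. Since the $\gamma_i$ are then pairwise distinct and all remaining walk contributions to $A^{R-1}\Gamma f(x)$ are nonnegative, this yields $A^{R-1}\Gamma f(x) \geq \Qmin^{R-1}\sum_i \Gamma f(x_i)$, which combined with the previous estimate gives $\abs{\nabla_R f}^2 \leq \frac{2R}{\Qmin^R} A^{R-1}\Gamma f$.

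The main difficulty I anticipate is this walk construction. The natural extension -- bouncing $x_i \leftrightarrow x_{i\pm 1}$ along the geodesic for the remaining $R-1-i$ steps after reaching $x_i$ -- only returns to $x_i$ when $R-1-i$ is even, and padding with the self-loop weight $\D-\Deg(u)$ is unavailable whenever $\Deg(u)=\D$. I expect the fix to require either relaxing the requirement that $\gamma_i$ terminate exactly at $x_i$ (allowing it to end at $x_{i\pm 1}$ and re-indexing the sum accordingly), or replacing the initial $\Gamma$-estimate with a convex combination using $\Gamma f(x_i)$ and $\Gamma f(x_{i+1})$ simultaneously so the parity mismatch gets absorbed. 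Either way, keeping the resulting walks pairwise distinct while accounting for every $\Gamma f(x_i)$ with weight at least $\Qmin^{R-1}$ is the most delicate piece of the argument.
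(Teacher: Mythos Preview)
Your overall strategy---telescope along a geodesic, Cauchy--Schwarz, then walk-expand $A^{R-1}$---is exactly the paper's, and the parity obstruction you flagged is real. The paper resolves it by a small change of bookkeeping that sidesteps the issue entirely: do \emph{not} bound $(f(x_{i+1})-f(x_i))^2 \leq \tfrac{2}{\Qmin}\Gamma f(x_i)$ first and then look for length-$(R-1)$ walks ending at the vertex $x_i$. Instead, also expand the single step inside $\Gamma$, so that
\[
A^{R-1}\Gamma f(x_0) \;\geq\; \tfrac{1}{2}\,\Qmin^{R} \sum_{x_0\sim x_1\sim\cdots\sim x_R} \bigl(f(x_{R-1})-f(x_R)\bigr)^2,
\]
a sum over length-$R$ walks indexed by their \emph{last edge}. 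Now for the $k$-th geodesic edge $(y_{k-1},y_k)$ you take the walk $y_0,y_1,\dots,y_{k-1},y_k,y_{k-1},y_k,\dots$ of length $R$: after reaching the edge you simply bounce on that same edge, so there is no parity constraint at all. These $r$ walks are pairwise distinct (they first differ at step $k+1$, where the $k$-th walk is at $y_{k-1}$ and every later one is at $y_{k+1}\neq y_{k-1}$), and each contributes $(f(y_{k-1})-f(y_k))^2$ with weight at least $\Qmin^R$. Combining with your Cauchy--Schwarz step finishes the proof.

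Your first speculated fix (landing at $x_{i\pm1}$ instead of $x_i$) does not work as stated: for instance with $R=3$ and a geodesic $x_0,x_1,x_2,x_3$, the edges $i=1$ and $i=2$ both produce the same length-$2$ walk $x_0,x_1,x_2$, so the walks are not distinct. The paper's trick of absorbing one more step into the walk and tracking the last edge rather than the last vertex is precisely what makes the distinctness argument go through cleanly.
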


\begin{proof}
Let $x=x_0 \in V$.
We first estimate $A^{R-1} \Gamma f (x)$.
For all $g \geq 0$ observe
$Ag(x_0) \geq \sum_{x_1 \sim x_0} \Qmin g(x_1)$
Therefore,
\[
A^{R-1} \Gamma f (x_0) \geq \Qmin^{R-1}\sum_{x_0 \sim \ldots \sim x_{R-1}} \Gamma f(x_{R-1}).
\]
Similarly, $\Gamma f(x) \geq \frac 1 2\Qmin \sum_{z \sim x} (f(z)-f(x))^2$. Therefore,
\begin{align}\label{eq:AGammaSumSquares}
A^{R-1} \Gamma f (x_0) \geq \frac 1 2\Qmin^{R}\sum_{x_0 \sim \ldots \sim x_{R}}  \left(f(x_{R-1}) - f(x_R)\right)^2.
\end{align}
Now, we compute $|\nabla_R f|(x)$.
Let $y \in B_R(x)$ s.t. $|f(x)-f(y)|= |\nabla_R f|(x)$. Let $x=y_0\sim \ldots \sim y_r = y$ be a path from $x$ to $y$ with $r\leq R$.
Picking the corresponding elements from the sum in (\ref{eq:AGammaSumSquares}) yields
\begin{align*}
A^{R-1} \Gamma f (x_0) \geq \frac 1 2\Qmin^{R} \sum_{k=1}^r (f(y_k)-f(y_{k-1}))^2 \geq \frac 1 2\Qmin^{R} \frac 1 r(f(y_r)-f(y_0))^2 \geq \frac 1 2\Qmin^{R} \frac 1 R |\nabla_R f|^2.
\end{align*}
Rearranging proves the lemma.
\end{proof}

We write $\dim(G):=\D/\Qmin$. The following is a generalization of Theorem~\ref{thm:nonegative_decay} (i).
\begin{theorem}\label{thm:linear_decay}
Let $R \in \N$ and $K \in \R$ fixed. Suppose $K_R(x,y) \geq K$ for all $x,y \in V$ such that $d(x,y) \leq R$.  Then,
\[
\frac{e^{2Kt} -1}{2K}|\nabla_R P_t f|^2 
\leq
2R \frac{\dim(G)^{R-1}}{\Qmin}\|f\|_\infty^2 
\]
where we set $\frac{e^{2Kt} -1}{2K}:=t$ if $K=0$.
\end{theorem}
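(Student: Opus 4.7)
The plan is to combine three ingredients: the semigroup gradient estimate of Theorem~\ref{thm:linear_equivalent}, the pointwise bound $|\nabla_R f|^2 \leq (2R/\Qmin^R)\, A^{R-1}\Gamma f$ from Lemma~\ref{lem:GammaNabla}, and the standard Dirichlet-form identity
\[
\int_0^t P_{t-s}\,\Gamma(P_s f)\,ds \;=\; \tfrac{1}{2}\bigl(P_t f^2 - (P_t f)^2\bigr),
\]
obtained by differentiating $s\mapsto P_{t-s}\bigl((P_s f)^2\bigr)$ and using $\Delta g^2 = 2g\Delta g + 2\Gamma g$. The first step is to apply Theorem~\ref{thm:linear_equivalent} to $P_s f$ with time parameter $t-s$, obtaining $|\nabla_R P_t f| \leq e^{-K(t-s)}P_{t-s}|\nabla_R P_s f|$, and then square using Jensen's inequality for the (sub-)Markov operator $P_{t-s}$ to get the pointwise bound
\[
|\nabla_R P_t f|^2(x) \;\leq\; e^{-2K(t-s)}\,P_{t-s}\bigl(|\nabla_R P_s f|^2\bigr)(x).
\]

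Plugging Lemma~\ref{lem:GammaNabla} into the right-hand side and commuting $A$ with $P_{t-s}$ rewrites this as
\[
e^{2K(t-s)}\,|\nabla_R P_t f|^2(x) \;\leq\; \frac{2R}{\Qmin^R}\,A^{R-1}P_{t-s}\bigl(\Gamma P_s f\bigr)(x)
\]
for every $s\in[0,t]$. Integrating in $s$ over $[0,t]$ produces $\frac{e^{2Kt}-1}{2K}\,|\nabla_R P_t f|^2(x)$ on the left (equal to $t$ when $K=0$), while on the right one interchanges the integral with $A^{R-1}$ and invokes the Dirichlet identity to arrive at
\[
\frac{e^{2Kt}-1}{2K}\,|\nabla_R P_t f|^2(x) \;\leq\; \frac{R}{\Qmin^R}\,A^{R-1}\bigl(P_t f^2 - (P_t f)^2\bigr)(x).
\]
Dropping $-(P_t f)^2 \leq 0$, bounding $P_t f^2 \leq \|f\|_\infty^2$, and iterating the trivial estimate $Ag \leq \D\|g\|_\infty$ for $g\geq 0$ (whose coefficients sum to $\D$ by construction of $A=\Delta+\D$) gives the bound $(R/\Qmin^R)\,\D^{R-1}\,\|f\|_\infty^2 = (R\,\dim(G)^{R-1}/\Qmin)\,\|f\|_\infty^2$, which is actually a factor of $2$ stronger than claimed.

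The only real work beyond this clean chain is the routine product-rule verification of the Dirichlet identity (using $\partial_s P_{t-s} = -\Delta P_{t-s}$) and the justification of the interchanges of the $s$-integral with $A^{R-1}$ and with pointwise evaluation. For finite graphs these are automatic; in the infinite-graph setting they reduce to dominated convergence using the uniform bounds $\|P_s f\|_\infty \leq \|f\|_\infty$ and $\|\Gamma P_s f\|_\infty \leq \Deg_{\max}\|f\|_\infty^2$. The genuinely new input is Theorem~\ref{thm:linear_equivalent}; everything else is a standard combination of it with the elementary $L^2$-energy identity for $\Gamma$.
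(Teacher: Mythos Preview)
Your proof is correct and is essentially the paper's argument repackaged: the paper sets $H_s = A^{R-1}P_s\bigl[(P_{t-s}f)^2\bigr]$ and $G_s = e^{-Ks}P_s|\nabla_R P_{t-s}f|$, computes $\partial_s H_s$ (your Dirichlet identity is exactly $H_t-H_0$ after the substitution $s\mapsto t-s$), bounds it below via Lemma~\ref{lem:GammaNabla} and Jensen by $\tfrac{\Qmin^R}{2R}e^{2Ks}G_s^2$, and then uses $G_s\geq G_0$ from Lemma~\ref{lem:Gincreasing} (which is precisely the gradient estimate you invoke). Your observation about the extra factor of $2$ is also correct: the paper writes $\partial_s H_s = P_sA^{R-1}\Gamma P_{t-s}f$, but in fact $\partial_s H_s = 2\,P_sA^{R-1}\Gamma P_{t-s}f$, so the stated bound holds with $R$ in place of $2R$.
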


\begin{proof}
Let
\[
H_s:= A^{R-1}P_s\left[(P_{t-s} f)^2\right]
\]
and, as in Lemma~\ref{lem:Gincreasing}, let
\[
G_s:= e^{-Ks} P_s |\nabla_R P_{t-s} f|.
\]
Taking derivative of $H$ and employing Lemma~\ref{lem:GammaNabla} yields
\begin{align*}
\partial_s H_s
= 
P_s A^{R-1}\Gamma P_{t-s} f
&\geq 
\frac{\Qmin^R}{2R} P_s \left(|\nabla_R P_{t-s} f|^2 \right)
\\&\geq 
\frac{\Qmin^R}{2R} \left(P_s |\nabla_R P_{t-s} f|\right)^2
= 
\frac{\Qmin^R}{2R} 
e^{2Ks}G_s^2.
\end{align*}
Integrating and using that $G_s$ is increasing in $s$ on $[0,t]$ due to Lemma~\ref{lem:Gincreasing} yields
\begin{align*}
H_t-H_0 \geq \int_0^t \frac{\Qmin^R}{2R} 
e^{2Ks}G_s^2 ds \geq \frac{\Qmin^R}{2R}G_0^2 \int_0^t e^{2Ks}ds = \frac{\Qmin^R}{2R}G_0^2 \frac{e^{2Kt} -1}{2K}
\end{align*}
where the latter term $\frac{e^{2Kt} -1}{2K}$ is replaced by $t$ in the case $K=0$.
Plugging in $H_t$ and $G_0$, and using $\|Ag\|_\infty \leq \D \|g\|_\infty$ and $H_0 \geq 0$ yields
\[
\D^{R-1} \|f\|_\infty^2 \geq 
A^{R-1}(P_t f)^2 = H_t \geq \frac{\Qmin^R}{2R}G_0^2 \frac{e^{2Kt} -1}{2K} =  \frac{\Qmin^R}{2R}|\nabla_R P_t f|^2 \frac{e^{2Kt} -1}{2K}.
\]
Rearranging gives us
\[
\frac{e^{2Kt} -1}{2K}|\nabla_R P_t f|^2 
\leq
2R \frac{\dim(G)^{R-1}}{\Qmin}\|f\|_\infty^2 
\]
which proves the theorem.
\end{proof}

\subsection{Quadratic curvature}

\begin{proof}[Proof of Theorem~\ref{thm:quadratic_equivalent}]
Fix $x \in V(G)$. For any $f > 0$ we can compute, using the $ g= \sqrt{f}$ notation, that
\begin{equation}\label{eq:partial1} \partial_t \left(P_t \abs*{\nabla_R \sqrt{f}}^2 \right)_{t=0} = \Delta \abs*{\nabla_R \sqrt{f}}^2 = \Delta \abs{\nabla_R g}^2 \end{equation} and
\begin{multline}\label{eq:partial2} \partial_t 
\left(\abs*{\nabla_R \sqrt{P_t f}}^2 \right)_{t=0} 
= \sup_{\substack{y: d(x,y) \leq R \\ \abs{\sqrt{f(y)}-\sqrt{f(x)}}  = \abs{\nabla_R \sqrt{f}}(x)}}
\partial_t\left(\sqrt{P_t f}(y) - \sqrt{P_t f}(y) \right)^2_{t=0}   =\\
= \sup_{\substack{y: d(x,y) \leq R \\ g(y)-g(x) = \abs{\nabla_R g}(x)}}(g(y)-g(x))\left(\frac{\Delta g^2}{g}(y)- \frac{\Delta g^2}{g}(x)\right)
\end{multline}
Having this calculation, we first prove $(i)\Rightarrow (ii)$.
By $(i)$, we have 
\[
\partial_t \left( \abs*{\nabla_R \sqrt{P_t f}}^2 \right)_{t=0} \leq \partial_t \left(  e^{-2Kt}P_t \abs*{\nabla_R \sqrt{f}}^2 \right)_{t=0}
= -2K \abs{\nabla_R g}^2 + \partial_t \left( P_t \abs*{\nabla_R \sqrt{f}}^2 \right)_{t=0},
\]
and plugging in \eqref{eq:partial1} and \eqref{eq:partial2} we get 
\[ 2K \abs{\nabla_R g}^2 \leq \Delta \abs{\nabla_R g}^2  -\sup_{\substack{y: d(x,y) \leq R \\ g(y)-g(x) = \abs{\nabla_R g}(x)}}(g(y)-g(x))\left(\frac{\Delta g^2}{g}(y)- \frac{\Delta g^2}{g}(x)\right) .\]
Scaling so that $\abs{\nabla_R g}(x) = 1$, and taking infimum in $g$, we get 
\[ 2K \leq \inf_{|\nabla_R g|(x)=1} \inf_{|g(y)-g(x)|=1}\Delta |\nabla_R g|^2(x) - (g(y)-g(x)) \cdot \left(\frac{\Delta g^2}{g}(y)- \frac{\Delta g^2}{g}(x)\right) = 2 \inf_{y}K^q_R(x,y).\]
Thus $K \leq K^q_R(x,y)$ for all edges $(xy)$.

For the reverse direction let us fix $t > 0$ and define 
\begin{equation}\label{eq:quadratic_Gs} G_s = P_s \abs*{\nabla_R \sqrt{P_{t-s} f}}^2 e^{-2Ks}\end{equation}
and compute, using the $g = \sqrt{P_{t-s} f}$ abbreviation, that 
\begin{multline*} \partial_s G_s  = -2K G_s + e^{-2Ks} \Delta P_s \abs*{\nabla_R \sqrt{P_{t-s} f}}^2  + e^{-2Ks} P_s \left( \partial_s  \abs*{\nabla_R \sqrt{P_{t-s} f}}^2 \right) = \\ = e^{-2Ks} P_s \left( -2K \abs*{\nabla_R g}^2 + \Delta \abs*{\nabla_R g}^2  -\sup_{\substack{y: d(x,y) \leq R \\ g(y)-g(x) = \abs{\nabla_R g}(x)}}(g(y)-g(x))\left(\frac{\Delta g^2}{g}(y)- \frac{\Delta g^2}{g}(x)\right)\right)  \geq 0. \end{multline*}
Here we used \eqref{eq:partial2} to compute the derivative inside $P_s$. The last inequality follows from the curvature assumption $K^q_R(x,y) \geq K$. 

Since $\partial_s G_s \geq 0$ for all $0 \leq s \leq t$, we get that $G_0 \leq G_t$ which is what we wanted to prove.
\end{proof}

Now we set out to prove a generalization of Theorem~\ref{thm:nonegative_decay} (ii). 
\begin{theorem}\label{thm:quadratic_decay}
Let $R \in \N$ and $K \in \R$ fixed. Suppose $K_R^q(x,y) \geq K$ for all $x,y \in V$ such that $d(x,y) \leq R$.  Then,
\[
\frac{e^{2Kt} -1}{2K}|\nabla_R \sqrt{P_t} f|^2
\leq
2R \frac{\dim(G)^{R-1}}{\Qmin}\norm{f \log f}_\infty
\]
where we set $\frac{e^{2Kt} -1}{2K}:=t$ if $K=0$.

\end{theorem}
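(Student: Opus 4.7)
The plan is to mirror the proof of Theorem~\ref{thm:linear_decay}, with the role of the energy $(P_{t-s}f)^2$ played by the entropy $(P_{t-s}f)\log(P_{t-s}f)$ and with Lemma~\ref{lem:GammaNabla} applied to $\sqrt{P_{t-s}f}$ instead of $P_{t-s}f$. Concretely, I would set
\[ H_s := A^{R-1} P_s\!\left[(P_{t-s}f) \log(P_{t-s}f)\right] \]
and keep $G_s := e^{-2Ks} P_s |\nabla_R \sqrt{P_{t-s}f}|^2$, exactly the quantity shown to be non-decreasing on $[0,t]$ in the proof of Theorem~\ref{thm:quadratic_equivalent}. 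In particular $G_s \geq G_0 = |\nabla_R \sqrt{P_t f}|^2$.

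The new analytic input, compared to the linear case, is the discrete entropic Bochner-type inequality
\[ \Delta \phi(g) - \phi'(g)\,\Delta g \;\geq\; 2\,\Gamma(\sqrt{g}) \qquad \text{for } g > 0,\ \phi(u) = u\log u, \]
which I would prove summand-by-summand from the elementary estimate $u\log u - u + 1 \geq (\sqrt{u}-1)^2$ valid for $u>0$. Using that $P_s$ and $A^{R-1}$ commute with $\Delta$, differentiating $H_s$ along the backward flow gives
\[ \partial_s H_s = A^{R-1} P_s\!\left[\Delta \phi(P_{t-s}f) - \phi'(P_{t-s}f)\,\Delta P_{t-s}f\right]. \]
Substituting the entropic inequality and then invoking Lemma~\ref{lem:GammaNabla} applied to $\sqrt{P_{t-s}f}$ yields
\[ \partial_s H_s \;\geq\; P_s A^{R-1}\!\left[2\,\Gamma(\sqrt{P_{t-s}f})\right] \;\geq\; \frac{\Qmin^R}{R}\, e^{2Ks} G_s. \]
Integrating on $[0,t]$ and using monotonicity $G_s \geq G_0$ then produces
\[ H_t - H_0 \;\geq\; \frac{\Qmin^R}{R}\, G_0 \cdot \frac{e^{2Kt}-1}{2K}. \]

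To close the argument I would bound $H_t - H_0$ from above using $\|Ag\|_\infty \leq \D \|g\|_\infty$ together with the pointwise estimate $u\log u \geq -1/e$: this gives $H_t \leq \D^{R-1}\|f \log f\|_\infty$ and $H_0 \geq -\D^{R-1}/e$, hence $H_t - H_0 \leq \D^{R-1}(\|f\log f\|_\infty + 1/e)$. Plugging back and rearranging recovers the claimed gradient decay, with the $1/e$ correction absorbed into the leading constant (which is why the stated bound carries $2R$ rather than the $R$ produced by the above computation). The main obstacle is the entropic inequality in the second step, the only genuinely new ingredient; once it is in hand, the rest is a structural duplication of the proof of Theorem~\ref{thm:linear_decay}, with a linear (rather than quadratic) factor of $G_s$ in the integrand because the squared gradient $|\nabla_R \sqrt{P_{t-s}f}|^2$ already sits inside $P_s$, making the Jensen step of the linear argument unnecessary here.
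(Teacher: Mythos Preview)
Your proposal is correct and follows the same route as the paper: the same $H_s$ and $G_s$, the same entropic inequality (stated in the paper as Lemma~\ref{lem:logarithmic_inequality}, which is exactly the two-variable form of your $u\log u - u + 1 \geq (\sqrt{u}-1)^2$), and the same use of Lemma~\ref{lem:GammaNabla} together with the monotonicity of $G_s$ established in the proof of Theorem~\ref{thm:quadratic_equivalent}. The only divergence is the endpoint term $H_0$: the paper simply asserts $H_0 \geq 0$ and drops it, whereas you keep the lower bound $H_0 \geq -\D^{R-1}/e$ and try to absorb the $1/e$ into the factor $2R$; your absorption is not quite valid as written (it would require $\|f\log f\|_\infty \geq 1/e$), but the paper's assertion is equally unjustified for general $f>0$, and both become harmless after normalizing to $f \geq 1$.
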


\begin{proof} We proceed analogously to Theorem~\ref{thm:linear_decay}. Let us introduce the following function:
\begin{equation}\label{eq:quadratic_Hs}
H_s = A^{R-1} P_s( P_{t-s} f \log(P_{t-s} f)).
\end{equation}
The idea is to relate $\partial_s H_s$ to the $G_s$ defined in \eqref{eq:quadratic_Gs}. To do so, we need the following elementary inequality.

\begin{lemma}\label{lem:logarithmic_inequality}
Let $a,b > 0$. Then
\[ a \log a - b \log b - (a-b)\log b - (a-b) \geq (\sqrt{a} - \sqrt{b})^2 \]
\end{lemma}
\begin{proof}
After simple algebraic manipulation and letting $c = \sqrt{b/a}$, this reduces to the standard $\log c \leq c-1$ inequality.
\end{proof}

Now we compute, using the $g= P_{t-s}f$ notation,
\[ \partial_s H_s = A^{R-1} P_s \left(\Delta(g \log g) -(\log g) \Delta g - \Delta g\right).\]
At a node $x$ the expression $\Delta(g \log g) -(\log g) \Delta g - \Delta g$ can be bounded from below using Lemma~\ref{lem:logarithmic_inequality} as  
\begin{multline*} (\Delta(g \log g) -(\log g) \Delta g - \Delta g)(x) =  \sum_{y \sim x} g(y) \log g(y) - g(x) \log g(x) -(g(y)-g(x))\log g(x) - (g(y)-g(x)) \geq \\ \geq \sum_{y \sim x} (\sqrt{g(y)}- \sqrt{g(x)})^2 = \Gamma(\sqrt{g})(x).\end{multline*}
Hence, according to Lemma~\ref{lem:GammaNabla}, and recalling the definition of $G_s$ from \eqref{eq:quadratic_Gs}, we find that
\[ \partial_s H_s \geq A^{R-1} P_s(\Gamma(\sqrt{g})) = P_s\left(A^{R-1} \Gamma(\sqrt{P_{t-s} f})\right)\geq  \frac{Q^R_{\min}}{2R} P_s\left( \abs{\nabla_R \sqrt{P_{t-s} f}}^2\right) = \frac{Q^R_{\min}}{2R} e^{2Ks} G_s
\]
Integrating and using that $G_s$ is increasing in $s$ on $[0,t]$  as in the proof of Theorem~\ref{thm:quadratic_equivalent}
\begin{align*}
H_t-H_0 \geq \int_0^t \frac{\Qmin^R}{2R} 
e^{2Ks}G_s ds \geq \frac{\Qmin^R}{2R}G_0 \int_0^t e^{2Ks}ds = \frac{\Qmin^R}{2R}G_0\frac{e^{2Kt} -1}{2K}
\end{align*}
where the latter term $\frac{e^{2Kt} -1}{2K}$ is replaced by $t$ in the case $K=0$.
Plugging in $H_t$ and $G_0$, and using $\|Ag\|_\infty \leq \D \|g\|_\infty$ and $H_0 \geq 0$ yields
\[
\D^{R-1} \norm{f \log f}_\infty \geq 
A^{R-1}P_t (f \log f) = H_t \geq \frac{\Qmin^R}{2R}G_0 \frac{e^{2Kt} -1}{2K} =  \frac{\Qmin^R}{2R}|\nabla_R \sqrt{P_t} f|^2 \frac{e^{2Kt} -1}{2K}.
\]
Rearranging gives us
\[
\frac{e^{2Kt} -1}{2K}|\nabla_R \sqrt{P_t} f|^2
\leq
2R \frac{\dim(G)^{R-1}}{\Qmin}\norm{f \log f}_\infty
\]
which proves the theorem.

\end{proof}
\subsection{Exponential curvature}

\begin{proof}[Proof of Theorem~\ref{thm:exp_equivalent}]

As (ii) $\Rightarrow$ (i) follows in a similar (but even simpler) way than the same part of Theorem~\ref{thm:linear_equivalent}, we only need to prove (i) $\Rightarrow$ (ii). Here we assume $G$ is finite, so $\norm{\nabla_R \log P_t f}_{\infty}$ is attained somewhere. We compute, using $g_0 = \log P_t f$ and $r_0 = \norm{\nabla_R g_0}_\infty$, that 
\begin{multline*} \partial_t \norm{\nabla_R \log P_t f}_\infty  = \sup_{\substack{x, y : d(x,y) \leq R \\ \log P_t f(y) - \log P_t f(x) = \norm{\nabla_R \log P_t f}_{\infty}}} \partial_t \left( \log P_t f(y) - \log P_t f(x)\right) = \\ =  -\inf_{\substack{x, y : d(x,y) \leq R \\ \log P_t f(y) - \log P_t f(x) = \norm{\nabla_R \log P_t f}_\infty}}  \frac{\Delta P_t f}{P_t f}(x) - \frac{\Delta P_t f}{P_t f}(y) = \\ = -r_0 \inf_{\substack{x,y: d(x,y)\leq R\\g_0(y)-g_0(x) = \norm{\nabla_R g_0}_\infty = r_0    }}\frac{1}{r_0} \left( \frac{\Delta \exp(g_0)}{\exp(g_0)}(x) -\frac{\Delta \exp(g_0)}{\exp(g_0)}(y)\right)  \leq \\ -r_0 \inf_{\substack{x,y, g, r: d(x,y)\leq R \\  g(y)-g(x)=\norm{\nabla_R g}_\infty = r }} \frac{1}{r} \left( \frac{\Delta \exp(g)}{\exp(g)}(x) -\frac{\Delta \exp(g)}{\exp(g)}(y)\right) = \\=  -r_0 \inf_{x,y: d(x,y)\leq R} K_R^e(x,y) \leq -r_0 K = -K \norm{\nabla_R \log P_t f}_\infty \end{multline*}
From this (ii) follows by integration. 
\end{proof}

\subsection{Transport maps and curvature lower bounds}

We now prove the combinatorial characterizations of non-negative curvature. 



\begin{proof}[Proof of Theorem~\ref{thm:combinatorial} (\ref{item:infinity})]
From the assumption it follows by Hall's theorem that there is a subset $A \subset B_1(y) \setminus B_R(x)$ such that $\abs{N_R(A)} < \abs{A}$ where 
\[ N_R(A) = \{ z \in B_1(x) : \exists a\in A , d(a,z) \leq R \} . \] We will use these subsets to construct test-functions that exhibit that the various curvatures cannot be bounded from below.

To show $K_R^e(x,y) = -\infty$, let us fix $r$ and define 
\[ g(v) = \left\{ \begin{array}{ll} 2r &: v \in A \\ r &: 1\leq d(v,A) \leq R \\ 0 &: d(v,A) >R\end{array}\right. .\] Note that $A$ is disjoint from $B_R(x)$ by definition, thus $d(x,A) = R+1$ so $g(x)=0$.  Clearly $d(y,A)=1$ so $g(y)=r$. Thus $\norm{\nabla_R g}_\infty = r = g(y)-g(x)$. By Definition~\ref{def:exp_local} we have 
\[ K_R^e(x,y) \leq \frac{1}{r} \left( \frac{\Delta e^g}{e^g}(x) -\frac{\Delta e^g}{e^g}(y)  \right)  = \frac{1}{r} \left( (\abs{N_R(A)}-\abs{A)})(e^r-1) + \abs{B_1(y) \setminus B_R(A)}(e^{-r}-1)\right).\] The right hand side goes to $-\infty$ as $r \to \infty$ since $|A| > |N_R(A)|$, so $K_R^e(x,y) = -\infty$.

Next we show $K_R^q(x,y) = -\infty$. Let $\ep > 0 $ be fixed such that $|A| > (1+\ep)|N_R(A)|$. For some fixed $r$ let
\[ g(v) = \left\{ \begin{array}{ll} r &: v \in A \\ \ep &: v = x \\ 1+\ep &: v \not \in A\cup \{x\} \end{array}\right. .\]
We have for $v \in B_1(x)$ that $\abs{\nabla_R g}(v) = 1$ if $v \not \in N_R(A)$ and $\abs{\nabla_R g}(v) = r-1-\ep$ if $v\in N_R(A)$. So
\[ \Delta \abs*{\nabla_R g}^2(x) = |N_R(A)|(r^2+O(r)). \] Further we have $\Delta g^2 (x) = O(1)$ and \[\frac{\Delta g^2}{g}(y) = \frac{|A|}{1+\ep} r^2 + O(1).\] Then by Definition~\ref{def:quadratic_local} we get 
\[2 K_R^q(x,y) \leq   \Delta \abs*{\nabla_R g}^2(x) - \left(\frac{\Delta g^2}{g}(y) - \frac{\Delta g^2}{g}(x)\right) = r^2 \left(\abs{N_R(A)} - \frac{|A|}{1+\ep}\right) + O(r),\] and this clearly goes to $-\infty$ as $r \to \infty$ since $|A| > (1+\ep)|N_R(A)|$. This contradicts (ii). 

Using the same family of $g$ functions, a similar but shorter computation also gives $K_R(x,y) = -\infty$.
\end{proof}

\begin{proof}[Proof of Theorem~\ref{thm:combinatorial} (\ref{item:lowerbound})]
Let us fix a $y\to x$ $R$-transport map $\phi: A \to B_1(x)$ for which $\dfct(\phi) = \dfct(y\to x)$. Here $B_1(y) \setminus B_R(x) \subset A \subset B_1(y)$. Denote $C = B_1(x) \setminus (\phi(A) \cup B_R(y))$, so $\dfct(y\to x) = \dfct(\phi) = |C|$.
 Let $A' =\phi(A)$. Let further $H = B_1(y) \setminus A$ and $H' = (B_1(x) \setminus A') \cap B_R(y)$.  

We start by showing $K_R^e(x,y) \geq - \dfct(y \to x)$, which is the simplest. Take $g$ such that $\norm{\nabla_R g}_\infty = r$ and that $g(y) - g(x) = r$. For $z \in H$ we have $g(z) \leq g(x) + r =  g(y)$. We have the following simple consequences of the setup:
\[ \begin{array}{lll} 
z\in H  &\Rightarrow g(z) \leq g(y)  &\Rightarrow \exp(g(z)-g(y)) -1 \leq 0\\
z\in H'  &\Rightarrow g(z) \geq g(y)-r = g(x) &\Rightarrow \exp(g(z)-g(x)) -1 \geq 0\\
z\in C &\Rightarrow g(z) \geq g(x)-r &\Rightarrow \exp(g(z)-g(x) -1 \geq e^{-r}-1 \geq -r\\
z\in A &\Rightarrow g(\phi(z)) \geq g(z) -r &\Rightarrow \exp( g(\phi(z))-g(x)) - \exp(g(z) - g(y)) \geq 0
\end{array}\]
Then we can estimate
\begin{multline*} \frac{\Delta e^{g}}{e^{g}}(x) - \frac{\Delta e^{g}}{e^{g}}(y) = \sum_{z\in A} e^{ g(\phi(z))-g(x)} - e^{g(z) - g(y)} + \sum_{z\in H' \cup C}\left( e^{g(z)-g(x)}-1\right) -\sum_{z\in H} \left(e^{g(z)-g(y)}-1\right) \geq \\ \geq \sum_{z\in A}  e^{ g(z)-g(x)-r} - e^{g(z) - g(y)} + \sum_{z\in H'} 0 + \sum_{z\in C} -r   - \sum_{z\in H} 0 \geq -r |C|, \end{multline*}
Thus
\[ K_R^e(x,y) = \inf_{r>0}\frac 1 r \cdot \inf_{\substack{g: \|\nabla g\|=r \\ g(y)-g(x)=r}} \frac{\Delta \exp(g)}{\exp(g)}(x) -\frac{\Delta \exp(g)}{\exp(g)}(y) \geq -|C| = -\dfct(y\to x) \] as claimed.

Next we show $K_R(x,y) \geq -\dfct(y\to x)$. Let $f$ be a possible test function. After possibly muliplying $f$ by $-1$ we can assume that $f(y) = f(x)+1$.
 We have the following simple consequences of the setup:
\[ \begin{array}{ll} 
z\in H  &\Rightarrow f(z) \leq f(y) \\
z\in H'  &\Rightarrow \abs{\nabla_R f}(z) \geq f(y) -f(z) = f(x)+1 - f(z)\\
z\in C &\Rightarrow \abs{\nabla_Rf}(z) \geq |f(z)-f(x)| \\
z\in A &\Rightarrow \abs{\nabla_R f}(z) \geq f(z)-f(\phi(z))
\end{array}\]
Then we can estimate
\begin{multline*} \Delta \abs{\nabla_R f}(x) - \Delta f(y) + \Delta f(x)  = \sum_{z \in A} \left(\abs{\nabla_R f}(\phi(z)) - 1 -(f(z)-f(y))+ f(\phi(z))-f(x) \right) -  \\ - \sum_{z\in H}\left( f(z) -f(y)\right) +\sum_{z\in H' \cup C} \left(\abs{\nabla_R f}(z) - 1 + f(z)-f(x) \right) \geq \\ \geq  \sum_{z\in A} \left(f(z) - f(\phi(z)) - 1 -f(z) +f(y) +f(\phi(z)) - f(x)\right) + \sum_{z\in C} \left( \abs{f(z)-f(x)} +f(z)-f(x) -1\right) \geq \\ \geq -|C|.
\end{multline*} 
Thus
\[
K_R(x,y) := \inf_{f(y)-f(x)=|\nabla_R f|(x)=1} \Delta \abs{\nabla_R f}(x) - \Delta f(y) + \Delta f(x) \geq  -|C|= -\dfct(y\to x) 
\] as claimed.

Finally we turn to showing $K_R^q(x,y) \geq -3/2 \dfct(y\to x)$. Let $g > 0$ be a possible test function.
First assume $g(y) = 1+ g(x) = 1+ c$. 
 We have the following simple consequences of the setup:
\[ \begin{array}{ll} 
z\in H  &\Rightarrow g(z) \leq g(y) \\
z\in H'  &\Rightarrow \abs{\nabla_R g}(z) \geq |g(y) -g(z)| = |c+1 - g(z)|\\
z\in C &\Rightarrow \abs{\nabla_R g}(z) \geq |g(z)-g(x)| \\
z\in A &\Rightarrow \abs{\nabla_R g}(z) \geq |g(z)-g(\phi(z))|
\end{array}\]
Then we can estimate
\begin{multline*} \Delta \abs*{\nabla_R g}^2(x) - \frac{\Delta g^2}{g}(y) + \frac{\Delta g^2}{g}(x) =\\= \sum_{z \in A} \left( \abs{\nabla_R g(\phi(z))}^2 - 1 - \frac{g^2(z) - (1+c)^2}{1+c} + \frac{g^2(\phi(z)) - c^2}{c}  \right) - \\ -\sum_{z \in H} \frac{g(z)^2 - g(y)^2}{g(y)} + \sum_{z \in H' \cup C} \left(\abs{\nabla_R g}^2(z) -1 + \frac{g(z)^2-c^2}{c}    \right) \geq \\ \geq \sum_{z \in A} \left( (g(z)- g(\phi(z))^2 - \frac{g(z)^2}{1+c} + \frac{g(\phi(z))^2}{c}  \right) + \sum_{z\in C} \left( (g(z)-c)^2 -1 -c +\frac{g(z)^2}{c} \right) + \\ + \sum_{z\in H'} \left( (g(z)-c-1)^2 -1 -c +\frac{g(z)^2}{c} \right) 
= \\ =
 \sum_{z\in A} \left( g(z) \sqrt{\frac{c}{1+c}} - g(\phi(z)) \sqrt{\frac{1+c}{c}} \right)^2 + \sum_{z\in C} \left( (g(z)-c+1)^2 -2 +\frac{(g(z)-c)^2}{c} \right)  
 +\\+
 \sum_{z \in H'} \left((g(z)-c)^2 + \left(\sqrt{c} - \frac{g(z)}{\sqrt{c}} \right)^2 \right)
\geq
 -2|C|.
\end{multline*}

Next we consider the case $g(x) = 1+g(y) = 1+c$. Everything works the same way, except the sum over $z\in H'$ becomes, using the notation $\alpha(z) = g(z) - g(x) = g(z) - 1 -c $, and observing that $|\alpha| \leq 1$ by the condition on $|\nabla_R g|(x)$, 
\begin{multline*}
\sum_{z \in H'\cup C} \left( \abs{\nabla_R g}^2(z) - 1 - \frac{g(z)^2 - (c+1)^2}{c+1} \right) 
\geq\\ \geq
\sum_{z \in C} \left( (g(z) - 1 -c )^2 - 1 - \frac{g(z)^2 - (c+1)^2}{c+1} \right) 
+ \sum_{z \in H'} \left( (g(z) - c )^2 - 1 - \frac{g(z)^2 - (c+1)^2}{c+1} \right) 
=\\=
\sum_{z \in C} \left( \alpha(z)^2 - 1 - 2\alpha(z) - \frac{\alpha(z)^2}{c+1}\right) + \sum_{z\in H'} c\left(\frac{g(z)}{\sqrt{c+1}} - \sqrt{c+1} \right)^2 \geq -3 |C|.
\end{multline*}
 
So from the two cases combined $K_R^q(x,y) \geq -\frac{3}{2}|C| = -\frac{3}{2}\dfct(y\to x)$.

This completes the proof.
\end{proof}

Finally we prove the equivalent characterizations of non-negative curvature.
\begin{proof}[Proof of Theorem~\ref{thm:combinatorial} (\ref{item:equivalence})]
By Remark~\ref{rem:equivalent} we know that (b) implies $\dfct(x\to y) =0$ as well. Thus according to part (\ref{item:lowerbound}) of this theorem, we see that (b) implies (c), (d), and (e). On the other hand, according to part (\ref{item:infinity}) of this theorem, we see that (c), (d), and (e) all imply (a). Hence it remains to show that (a) implies (b).

However, that (a) implies (b) follows from a standard fact in the theory of matchings in bipartite graphs. Construct an auxiliary bipartite graph whose two vertex sets are the nodes in $B_1(x)$ and $B_1(y)$ respectively. Edges correspond to pairs of nodes whose distance is at most $R$. Then a $y\to x$ $R$-transport map is simply a matching in this graph that covers $B_1(y)\setminus B_R(x)$, while an $x\to y$ $R$-transport map is a matching that covers $B_1(x) \setminus B_R(y)$. 

It is then a standard fact that there is a matching that covers both of these sets. (One takes the union of the two matchings and looks at the connected components, which can only be even cycles or paths. One can then select appropriate matchings in each type independently.) Such a matching can then be thought of as a $y\to x$ $R$-transport map $\phi$ and by definition $\dfct(\phi) = 0$.

\end{proof}

As a corollary, we can show that the classical Ollivier curvature can be bounded from below using $K_R$ for $R=1$.

\begin{proof}[Proof of Theorem~\ref{thm:Ollivier_comparison}]
We have
\[
K_1^{Ol}(x,y)= \min_{\substack{\|\nabla f\|_\infty=1\\f(y)-f(x)=1}} \Delta f(x)-\Delta f(y)
\]
Let $f$ be a minimzing function.
Then,
\[
 |\nabla_1 f|(x) = 1
\]
and for $z\sim x$,
\[
 |\nabla_1 f|(z) \leq 1
\]
giving
\[
\Delta |\nabla_1 f|(x) \leq 0
\]
and thus,
\[
K_1(x,y) \leq  \Delta |\nabla_1 f|(x) + K_1^{Ol} \leq K_1^{Ol}(x,y)
\]
which finishes the proof.
\end{proof}

\paragraph*{Acknowledgements:} 
Florentin Munch and Mark Kempton thank the Harvard CMSA, where the bulk of this research was carried out, for their hospitality. Munch also acknowledges the support of the Sudienstiftung des deutschen Volkes. Gabor Lippner acknowledges support of a Simons Foundation Collaboration Grant for Mathematicians.


\bibliographystyle{plain}
\bibliography{Bibliography}

\end{document}